\numberwithin{equation}{section}
\newtheorem{theorem}[equation]{Theorem}
\newtheorem{lemma}[equation]{Lemma}
\newtheorem{corollary}[equation]{Corollary}
\theoremstyle{remark}
\newtheorem{remark}[equation]{Remark}
\theoremstyle{definition}
\newtheorem{definition}[equation]{Definition}
\newtheorem{example}[equation]{Example}
\def\XXint#1#2#3{{\setbox0=\hbox{$#1{#2#3}{\int}$}
	\vcenter{\hbox{$#2#3$}}\kern-.5\wd0}}
\newcommand{\N}{\mathbb N}
\newcommand{\R}{\mathbb R}
\newcommand{\Z}{\mathbb Z}
\newcommand{\G}{{\mathbb G}}
\newcommand{\g}{\mathfrak{g}}
\newcommand{\z}{\mathfrak{z}}
\newcommand{\Lie}{\operatorname{Lie}}
\begin{document}

\title[Ultrarigid tangents of sub-Riemannian nilpotent groups]{Ultrarigid tangents of sub-Riemannian nilpotent groups
} 

\author{Enrico Le Donne}
\address{
Department of Mathematics and Statistics, University of Jyv\"askyl\"a, 40014 Jyv\"askyl\"a, Finland}
\email{enrico.ledonne@jyu.fi}


\author{Alessandro Ottazzi}
\address{
CIRM Fondazione Bruno Kessler, Via Sommarive 14, 38123 Trento, Italy}
\email{ottazzi@fbk.eu}

\author{Ben Warhurst}

\address{Faculty of Mathematics Infomatics and Mechanics\\ University of Warsaw, Poland}
\email{benwarhurst@mimuw.edu.pl}

\date{December 22, 2013}

\thanks{During the initial stages of this project, E.L.D. was supported by ETH Z\"urich, while
B.W. was supported by Universit\`a di Milano Bicocca and Universit\"at Bern.}

\subjclass{
53C17, 
30L10,  
22E25, 
 26A16.  
 }
 
\begin{abstract}
We show that the tangent cone at the identity is not a complete quasiconformal invariant for sub-Riemannian nilpotent groups.
Namely, we show that there exists a nilpotent Lie group equipped with left invariant
sub-Riemannian metric that is not locally quasiconformally equivalent to
its tangent cone at the identity.
In particular, such spaces are not locally bi-Lipschitz homeomorphic.
The result is based on the study of Carnot groups that are rigid in the sense that their only 
 quasiconformal maps are the translations and the dilations.
\end{abstract}

\maketitle
\tableofcontents

\section{Overture}
\subsection{Overview of the results}
By means of a result  \cite{Margulis-Mostow} of Margulis and Mostow,   if two equiregular sub-Riemannian manifolds are quasiconformally equivalent, then 
almost everywhere they have isomorphic tangent cones.
   In particular, the tangent cone is a quasiconformal invariant.
 Their work extends   a result \cite{pansu1} of   Pansu, for which two Carnot groups are quasiconformally equivalent only if they are isomorphic.

The main goal of this paper is to show that the converse of the theorem of Margulis and Mostow fails in a strong sense. We show  the following statement.
\begin{theorem}\label{main}
There exists a nilpotent Lie group equipped with left invariant sub-Riemannian metric that is not locally quasiconformally equivalent to its tangent cone.
\end{theorem}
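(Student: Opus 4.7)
The plan is to exploit the existence of \emph{rigid} Carnot groups --- Carnot groups $G$ whose only quasiconformal self-homeomorphisms are compositions of left translations with graded dilations (the abstract announces a study of such groups, which I take as a separate input). From such a $G$ one constructs a non-stratified nilpotent Lie group $N$ whose tangent cone at the identity is isomorphic to $G$, as follows. Let $\g = V_1 \oplus \cdots \oplus V_s$ be the stratified Lie algebra of $G$. Perturb the bracket of $\g$ by a bilinear term whose image lies strictly higher in the grading than the sum of the degrees of its arguments, chosen so that the Jacobi identity is preserved and so that the resulting filtered algebra $\g'$ is not isomorphic to $\g$. The associated graded of $\g'$ is then $\g$, so the simply connected Lie group $N$ with Lie algebra $\g'$, equipped with the left-invariant sub-Riemannian metric obtained from the horizontal distribution corresponding to $V_1$, has tangent cone at the identity equal to $G$.

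Theorem \ref{main} is then proved by contradiction. Suppose $f \colon U \to V$ is a quasiconformal homeomorphism between open neighborhoods of the identities in $N$ and $G$. The Margulis--Mostow theorem cited in the Overview implies that at almost every $p \in U$ the Pansu differential $D_p f$ exists and is a quasiconformal isomorphism between the tangent cones, both of which are copies of $G$. Rigidity of $G$ then forces $D_p f$ to be of the form $L_{g(p)} \circ \delta_{\la(p)}$ with $g(p) \in G$ and $\la(p) > 0$, for almost every $p \in U$.

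The main obstacle is to upgrade this pointwise a.e.\ description of the Pansu differential to a rigidity statement on $f$ itself that contradicts $N \not\cong G$. The idea is that the admissible derivatives form only a two-parameter family --- a translation and a dilation, with no rotational or shear freedom left --- so the measurable maps $p \mapsto g(p)$, $p \mapsto \la(p)$ cannot fluctuate wildly along horizontal curves without violating the cocycle conditions satisfied by derivatives of an a.e.\ Pansu differentiable map. One then argues that $f$ is smooth on $U$, with horizontal differential whose form forces $f$ to intertwine locally the bracket of $\g'$ with that of $\g$. Since simply connected nilpotent Lie groups are determined by their Lie algebras, this yields $\g' \cong \g$, a contradiction with the construction of $N$.

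The delicate technical point is thus the regularity bootstrap: passing from pointwise a.e.\ Pansu differentiability together with the affine constraint on $D_p f$ to enough smoothness of $f$ to read off a Lie algebra morphism. This is where the extreme rigidity of $G$ (no continuous deformation of $D_p f$ beyond the affine group) is decisive, as it eliminates the exotic scenarios that ordinarily complicate quasiconformal differentiability arguments in Carnot groups. I would expect that, once the affine structure of the differential is in hand a.e., standard ACL properties of quasiconformal maps together with a connectedness argument along horizontal curves suffice to promote this to pointwise rigidity and then to the sought Lie algebra isomorphism.
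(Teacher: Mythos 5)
Your construction of the counterexample is the same as the paper's: take an ultrarigid Carnot group $\G$ and deform its bracket by a term of higher filtration degree to obtain a non-stratified nilpotent $H$ whose nilpotentisation at every point is still $\g$. The gap is in the second half. You reduce everything to a ``regularity bootstrap'': from $D_pf$ being a.e.\ a dilation (note: the Margulis--Mostow differential is a group isomorphism of tangent cones, so it cannot contain a translation factor $L_{g(p)}$ in the first place), you hope to conclude that $f$ is smooth and then that $f$ ``intertwines the brackets'' of $\g'$ and $\g$, giving $\g'\cong\g$. Neither step is justified. The Capogna--Cowling regularity result that upgrades ``$df=\delta_{\lambda(p)}$ a.e.'' to smoothness is a theorem about maps \emph{between Carnot groups}; your $f$ goes from the non-Carnot group $N$ to $G$, and you offer no substitute. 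Worse, even granting smoothness, a smooth contact diffeomorphism with dilation-like horizontal differential does not intertwine the Lie algebra structures: the Lie bracket of vector fields is preserved by any diffeomorphism, but the left-invariant frames are not, and the tangent-cone construction has already discarded exactly the higher-order bracket $[e_1,e_{11}]=e_{14}$ that distinguishes $\mathfrak h$ from $\g$. So ``read off a Lie algebra morphism'' is precisely the point where a new idea is needed, and your appeal to ACL properties and connectedness along horizontal curves does not supply it.

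The paper avoids differentiating $f$ altogether. Given $f:U\subset\G\to U'\subset H$ with $f(e_\G)=e_H$, it forms the quasiconformal self-maps $f_h=f^{-1}\circ\tau_h\circ f$ of $\G$ and invokes ultrarigidity in its strong form (every quasiconformal embedding of an open subset of $\G$ into $\G$ is a translation composed with a dilation --- this is the content of Theorem \ref{ultra-rigidity}, proved via Tanaka prolongation) to place each $f_h$ in ${\rm TD}(\G)$. The assignment $h\mapsto f_h$ is an injective local homomorphism, so $\mathfrak h$ embeds as a nilpotent codimension-one subalgebra of ${\rm Lie}({\rm TD}(\G))=\g\rtimes\R D$; a short algebraic argument (Lemma \ref{HisG}, using that ${\rm ad}_{D-\alpha Y}$ cannot be nilpotent on the first stratum) forces this subalgebra to be $\g$ itself, whence $H\cong\G$, contradicting the choice of $H$. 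If you want to salvage your route, the conjugation trick is the missing ingredient: it converts the analytic problem about $f$ into an algebraic one about subalgebras of $\g\rtimes\R$, which is where the contradiction with the deformed bracket actually materialises.
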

Note that the theorem above holds in particular for locally bi-Lipschitz maps.
We recall that the tangent cone of an equiregular sub-Riemannian manifold is a Carnot group, and it coincides with the nilpotentisation of its sub-Riemannian structure~\cite{Mitchell}. In order to establish the result, we shall study groups with the property that whenever they are quasiconformally equivalent to some other group,  they are in fact  isomorphic to it, see Theorem \ref{main2}. With this purpose in mind, we consider  
   Carnot groups whose quasiconformal maps are only translations and dilations. We shall refer to groups with this property as {\em ultrarigid groups}.
In order to show that some group is ultrarigid, we  prove the following algebraic characterization.
\begin{theorem}\label{ultra-rigidity}\label{ultralip}
Let  $\G$ be a   Carnot group. Then the following are equivalent:
\begin{itemize}
\item[{[\ref{ultra-rigidity}.1]}]   For any $U\subset \G $ open and any quasiconformal embedding $f: U\to \G $, one has that  $f$ is the restriction of the
composition of
a  left translation  and a dilation;
\item[{[\ref{ultra-rigidity}.2]}]  
Every strata preserving automorphism of Lie$(\G)$ is a dilation.
\end{itemize}
\end{theorem}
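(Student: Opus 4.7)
The plan is to treat the two implications separately. The forward direction [\ref{ultra-rigidity}.1] $\Rightarrow$ [\ref{ultra-rigidity}.2] should be short. Given a strata preserving automorphism $\phi$ of $\Lie(\G)$, exponentiating yields a Lie group automorphism $\Phi\colon \G\to \G$ that preserves the horizontal distribution and commutes with the Carnot dilations (since $\phi$ preserves each stratum). Consequently the Pansu differential of $\Phi$ at every point coincides with $\phi$, and $\Phi$ is quasiconformal with constant distortion. Applying [\ref{ultra-rigidity}.1] to $\Phi$, we obtain $\Phi=\tau_p\circ \delta_\lambda$ for some $p\in \G$ and $\lambda>0$; but $\Phi(e)=e$ forces $p=e$, whence $\phi$ is itself a dilation, as required.

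For the reverse direction, let $f\colon U\to \G$ be a quasiconformal embedding. By the Margulis--Mostow theorem \cite{Margulis-Mostow}, the Pansu differential $D_Pf(x)$ exists at almost every $x\in U$ and is a strata preserving automorphism of $\Lie(\G)$. The hypothesis [\ref{ultra-rigidity}.2] then gives $D_Pf(x)=\delta_{\lambda(x)}$ for some $\lambda(x)>0$ almost everywhere, which forces the pointwise distortion of $f$ to be $1$; hence $f$ is $1$-quasiconformal.

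The crux is to promote $1$-quasiconformality to the rigid form prescribed by [\ref{ultra-rigidity}.1]. The plan is to invoke the regularity theory of $1$-quasiconformal maps between Carnot groups (in the line of Capogna and Capogna--Cowling) to conclude that $f$ is smooth and that $D_Pf(x)=\delta_{\lambda(x)}$ holds at every $x\in U$ with $\lambda$ smooth. After a suitable pre- and post-composition with a translation and a dilation, we may assume $f(e)=e$ and $D_Pf(e)=\id$. An integrability argument, based on the fact that the pointwise identity $D_Pf(x)=\delta_{\lambda(x)}$ must be compatible with Lie brackets among horizontal vector fields which Lie-generate $\Lie(\G)$, then forces $\lambda$ to be locally constant and $f$ to coincide with the identity near $e$.

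The main obstacle is the regularity step: without smoothness of $f$, the pointwise algebraic identity on the Pansu differential is not available, so the argument hinges on being able to cite (or reprove in this setting) that $1$-quasiconformal self-maps of a Carnot group whose Pansu differential is almost everywhere a dilation are smooth. Once smoothness is in hand, the rigidity conclusion should follow by an adaptation of the classical Liouville-type argument for similarities in $\R^n$ with $n\geq 3$ to the graded Carnot setting, where the constraint becomes even more restrictive because every stratum is scaled by a power of the single factor $\lambda(x)$.
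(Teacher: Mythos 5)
Your treatment of $[\ref{ultra-rigidity}.1]\Rightarrow[\ref{ultra-rigidity}.2]$ is exactly the paper's: lift $\phi\in{\rm Aut}_0(\g)$ to a group automorphism, which is a contact/quasiconformal self-map fixing $e$, so it lies in ${\rm TD}(\G)$ with trivial translation part. The first half of your converse also matches the paper: by $[\ref{ultra-rigidity}.2]$ the Pansu differential is $\delta_{\lambda(p)}$ almost everywhere, and Capogna--Cowling (the paper's Lemma~\ref{dfsmooth}, i.e.\ \cite[Corollary 7.2]{Capogna-Cowling}) upgrades $f$ to a smooth $1$-quasiconformal contact map. You correctly identify the remaining step as the crux.

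The gap is precisely that remaining step, which you leave as an ``integrability argument'' modelled on the Liouville theorem for similarities. That model does not transfer. In $\R^n$ one writes $f_*X_i=\mu X_i$ with $\mu=\lambda\circ f^{-1}$ and uses $[f_*X_i,f_*X_j]=f_*[X_i,X_j]=0$ to force the correction terms $\mu(X_i\mu)X_j-\mu(X_j\mu)X_i$ to vanish, whence $\mu$ is constant. In a nonabelian Carnot group these correction terms are \emph{horizontal}, whereas $[X_i,X_j]$ lives in the second layer, and the Pansu differential $\delta_{\lambda(x)}$ only controls $f_*$ on higher layers modulo lower terms of the filtration; so the bracket identities absorb the horizontal corrections rather than annihilating them, and ``compatibility with Lie brackets among horizontal fields'' does not by itself yield that $\lambda$ is locally constant. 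This is where the paper invests its real work: it identifies the Lie algebra $\mathcal{C}(U)$ of contact vector fields with the Tanaka prolongation ${\rm Prol}(\g)$, proves in Lemma~\ref{prodim1} (a nontrivial computation using a central element of the top stratum, and the only place where $[\ref{ultra-rigidity}.2]$ enters beyond the pointwise differential) that ${\rm Prol}(\g)=\g\oplus\R D$ when $\g_0$ is one-dimensional, and then lets $f_*$ act on this finite-dimensional algebra; Lemma~\ref{autorho} shows the induced automorphism preserves $\g$ and the strata, hence is $\delta_\lambda$, and after dividing by $\delta_\lambda$ the map fixes every right-invariant field, commutes with left translations, and is therefore the identity. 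Your sketch contains no substitute for the prolongation computation, so as written the converse implication does not close.
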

The class of groups defined by [\ref{ultra-rigidity}.2] was considered by Pansu in~\cite{pansu1}. He showed that there exist infinitely many $2$-step Carnot groups with this property, although his proof does not provide explicit examples. 
 We exhibit two examples of groups satisfying [\ref{ultra-rigidity}.2].
The first one is a $2$-step stratified nilpotent Lie group, whereas the latter has step $3$ and it is not in the class studied by Pansu. Finally, we point out     that,  in the case of $2$-step Carnot groups,  the nontrivial implication $[\ref{ultra-rigidity}.2]\Rightarrow [\ref{ultra-rigidity}.1]$ of Theorem~\ref{ultra-rigidity} was proved by Capogna and Cowling, see~\cite{Capogna-Cowling}.

\subsection{State of the art}
Given a metric space $(X,d)$ and a base point $x\in X$, one can consider the {\em blow-down spaces } and the {\em blow-up spaces} of $X$ at $x$.
Namely, a pointed metric space $((Z,\rho), z)$ is a  blow-up (resp. a blow-down) of $X$ at $x$ if there exists a sequence of positive real numbers $\lambda_j$ with $\lambda_j\to \infty$ (resp. $\lambda_j\to 0$), as $ j\to \infty$, such that
$((X,\lambda_j d), x)$ Gromov-Hausdorff converges to $((Z,\rho), z)$.
Such blow-down spaces and  blow-up spaces are not unique and do not always exist.
Whenever the limit exists for any sequence $\lambda_j\to \infty$ (resp. $\lambda_j\to 0$) and does not depend on the sequence,  the  blow-up (resp.  blow-down) space is called the {\em tangent cone} (resp. {\em asymptotic cone}).
In many situations a map between metric spaces induces a map between blow-down  or   blow-up spaces.
 A key fact is that 
  the induced map has often more geometric structure than the initial map.

We recall two examples of blow-down and   blow-up spaces, which are well known in sub-Riemannian geometry and in Geometric Group Theory, respectively.
Let $M$, $M'$ be two manifolds endowed with some   sub-Riemannian distances induced by equiregular horizontal distributions.
In such a setting, the blow-up spaces do not depend on the scaling sequences and are stratified nilpotent Lie groups, see \cite{Mitchell}.
Let $f:M\to M'$ be a quasiconformal homeomorphism.
According to    \cite{Margulis-Mostow}, for almost every $p\in M$, the map $f$ blows up at $p$ to a strata preserving  group isomorphism between the blow-up space at $p$ and the one at $f(p)$.
Regarding the large scale geometry of groups,
let $\Gamma$ be a finitely generated nilpotent group. Endow $\Gamma$ with any word distance induced by a finite generating set. The unfamiliar reader might just think that  $\Gamma$ is a connected, simply connected nilpotent Lie group endowed with a Riemannian left invariant distance.
By \cite{Pansu-croissance}, the blow-down space of $\Gamma$ is unique and is a stratified nilpotent Lie group endowed with a left invariant Carnot-Carath\'eodory distance, induced by a norm on the first stratum.
Likewise the general framework, any quasi-isometry blows down to a bi-Lipschitz homeomorphism of the blow-down spaces. Consequently, such blow-down spaces are isomorphic.

 Once a map is given at the blow-up or at the blow-down level,
it is then natural to ask if we can integrate back to a map between the initial spaces.
Namely, if we are given two sub-Riemannian manifolds with isomorphic blow-up spaces at a point (resp. two finitely generated nilpotent groups with isomorphic blow-down spaces), to what extent we may conclude that the two manifolds are quasiconformally equivalent (resp. the nilpotent groups are quasi-isometric)?

The fact that the blow-down space is not a complete quasi-isometric invariants was proved by Shalom~\cite{Shalom}, using group cohomology. Namely, he shows that quasi-isometric nilpotent groups have same Betti numbers. Then he exhibits an example due to Benoist of two nilpotent groups with same blow-down space and different Betti numbers. We summarize the result of Shalom as the following statement.
\begin{theorem}[{Shalom \cite[page 152]{Shalom}}]
There exist  two  finitely generated nilpotent   groups $\Gamma$ and $\Lambda$ that have the same blow-down space, but they are not   quasi-isometric equivalent.
\end{theorem}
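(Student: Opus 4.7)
The plan is to proceed in two independent parts and then combine them. First, I would show that the sequence of real Betti numbers is a quasi-isometry invariant within the class of finitely generated nilpotent groups. Second, I would exhibit two such groups whose associated graded Carnot Lie algebras are isomorphic but whose Betti numbers differ.

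For the first part, passing to finite-index subgroups if necessary, I would identify $\Ga$ with a cocompact lattice in its real Malcev completion $G_\Ga$. By Nomizu's theorem the group cohomology with real coefficients is computed by the Chevalley--Eilenberg complex of the Lie algebra,
\[
H^{\ast}(\Ga,\R)\;\cong\;H^{\ast}(\Lie(G_\Ga),\R).
\]
Thus the Betti numbers of $\Ga$ are purely Lie-algebraic data. The task is then to promote a quasi-isometry $f\colon\Ga\to\La$ to an isomorphism $H^{\ast}(\Ga,\R)\cong H^{\ast}(\La,\R)$. Following Shalom, one extracts from $f$ a measure-theoretic coupling between $\Ga$ and $\La$ and transfers cocycles through a coarse cohomology theory (of $\ell^p$ or uniformly bounded type) which is manifestly a QI invariant. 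The main obstacle is to show that, for nilpotent groups, this large-scale cohomology agrees in the relevant degrees with the ordinary group cohomology with real coefficients; the argument hinges on polynomial growth and on averaging over balls of large radius.

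For the second part, I would invoke an example of Benoist producing two nilpotent Lie algebras $\g_1,\g_2$ defined over $\Q$ whose associated graded algebras satisfy $\operatorname{gr}(\g_1)\cong\operatorname{gr}(\g_2)$, but such that $\dim H^{\ast}(\g_1,\R)\neq \dim H^{\ast}(\g_2,\R)$. Setting $G_i\defeq\exp(\g_i\otimes_\Q\R)$, Malcev's theorem yields cocompact lattices $\Ga_i\le G_i$. By Pansu's theorem recalled in the introduction, the blow-down of $\Ga_i$ endowed with any word metric (equivalently, of $G_i$ with a left-invariant Riemannian metric) is the Carnot group $\exp(\operatorname{gr}(\g_i)\otimes\R)$, and these are isomorphic for $i=1,2$ by the choice of Benoist's pair.

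Combining the two parts finishes the proof: if $\Ga_1$ and $\Ga_2$ were quasi-isometric, Part 1 would force $b_k(\Ga_1)=b_k(\Ga_2)$ for every $k$, contradicting Benoist's construction. Hence $\Ga_1$ and $\Ga_2$ share a common blow-down space while failing to be quasi-isometric, which is the required example.
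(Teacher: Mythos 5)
Your proposal follows exactly the strategy the paper attributes to Shalom: quasi-isometry invariance of real Betti numbers among finitely generated nilpotent groups, combined with Benoist's example of two nilpotent groups whose associated graded (Carnot) algebras coincide --- hence equal blow-downs by Pansu --- but whose Betti numbers differ. The paper gives no independent proof and simply cites \cite{Shalom}, so your outline matches its intended argument; the only caveat is that the hard analytic step (identifying the coarse cohomology with ordinary real group cohomology) is described rather than carried out, which is consistent with the paper's own level of detail.
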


Although blow-down spaces capture the asymptotic geometry, Shalom's result shows that they do not capture the whole large scale geometry.
Similarly, for general sub-Riemannian manifolds, blow-up spaces capture only the infinitesimal geometry and not the local geometry.
To see this, one can consider an example\footnote{The existence of sub-Riemannian manifolds  whose blow-up space varies continuously was noticed by Pansu. An explicit $11$-dimensional example has being given by Varchenko in \cite{Varchenko}.}
 of a sub-Riemannian manifold $M$ whose blow-up space is not constant on a full measure set. Indeed, fix $p\in M$ and let $\G$ be the blow-up of $M$ at $p$. We claim that no neighborhood of $p$ is quasiconformally equivalent to an open set in $\G$.
Since $\G$ is a cone, it is isometric to its blow-up space. Hence, the blow-up of $M$ at $p$ is isomorphic to the blow-up of $\G$ at the identity. Assume by contradiction that there exists  a quasiconformal embedding $f:U\subset M \to \G$. Then by   \cite{Margulis-Mostow} almost every blow-up space in $U$ needs to be isomorphic to $\G$. Since this is not the case, such a quasiconformal  map does not exist.

We conclude that a necessary condition for a sub-Riemannian manifold $M$ to be quasiconformally equivalent to a Carnot group $\G$ is that the blow-up space of $M$  is $\G$ at almost every point.
It is then natural to ask what happens when the manifold has the same   blow-up space at   every point.
Here the work of Pansu, Margulis, and Mostow fails to give an answer. 
One needs to find a different strategy.

A natural example of a manifold  with constant blow-up spaces is provided by a Lie group  $G$ endowed with a left invariant sub-Riemannian distance.
In this case, the isometry group of $G$ acts transitively on $G$.
Therefore the blow-up space is the same at every point   of $G$. 
In this article, we provide a nilpotent Lie group of dimension $16$ that is not locally quasiconformal (and hence not locally bi-Lipschitz) equivalent to its blow-up. In particular, we have the following consequence.
\begin{corollary}\label{corollario}
There exist  two sub-Riemannian nilpotent Lie groups $H$ and $G$, that have the same blow-up space at every point, but they are not (locally) quasiconformally equivalent.
\end{corollary}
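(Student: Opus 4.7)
The plan is to derive the corollary essentially as a restatement of Theorem~\ref{main}, once one verifies that both groups involved have constant tangent cone at every point. Let $H$ denote the nilpotent Lie group supplied by Theorem~\ref{main}, endowed with its left-invariant sub-Riemannian metric, and let $G$ be the tangent cone of $H$ at the identity. I will show that $H$ and $G$ have the property required.

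First I would verify that $H$ has the same blow-up space at every point. Since the sub-Riemannian metric on $H$ is left invariant, every left translation $L_h\colon H\to H$ is an isometry, so $\isom(H)$ acts transitively on $H$. A transitive group of isometries carries tangent cones to tangent cones, and hence the tangent cone at an arbitrary point $p\in H$ is canonically isometric to the tangent cone $G$ at the identity.

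Next I would verify the same fact for $G$. Recall from the discussion preceding Theorem~\ref{main} that $G$ is a Carnot group equipped with its Carnot--Carath\'eodory metric. Being a Carnot group, $G$ admits a one-parameter family of dilations $\delta_\lambda$ which satisfy $d(\delta_\lambda x,\delta_\lambda y)=\lambda\, d(x,y)$, so $(G,\lambda d)$ is isometric to $(G,d)$ for every $\lambda>0$. Hence the blow-up of $G$ at the identity (in the Gromov--Hausdorff sense) exists and equals $G$ itself. By the same left-invariance argument as for $H$, this tangent cone is $G$ at \emph{every} point.

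Finally, by Theorem~\ref{main}, the group $H$ is not locally quasiconformally equivalent to its tangent cone, which is precisely $G$. Taking this $H$ and this $G$ yields two sub-Riemannian nilpotent Lie groups with the same blow-up at every point that are nevertheless not locally quasiconformally equivalent, proving Corollary~\ref{corollario}. The only non-routine input is Theorem~\ref{main} itself; the corollary's content lies entirely in packaging its conclusion into the ``same tangent cone at every point'' form, and no further obstacle arises.
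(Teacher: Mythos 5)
Your proposal is correct and follows essentially the same route as the paper: the authors also deduce the corollary directly from Theorem~\ref{main}, observing that a left-invariant sub-Riemannian metric makes the isometry group act transitively (so the blow-up of $H$ is the same at every point) and that a Carnot group, being dilation-invariant, is isometric to its own blow-up at every point.
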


We conclude our survey section by recalling some positive results: blow-up spaces  of Riemannian manifolds   and contact $3$-manifolds
do capture the local geometry.
Indeed, every point $p$ in a Riemannian $n$-manifold $M$ has a neighborhood that is bi-Lipschitz equivalent to an open set in $\R^n$, which is the blow-up   of $M$ at $p$.
The same phenomenon appears for  contact $3$-manifolds. By  Darboux's Theorem, every point in a contact manifold  has a neighborhood that is contactomorphic to an open set of the standard contact structure.
We can   see this as a metric statement. Indeed, every contact manifold can be endowed with a sub-Riemannian structure, which is unique up to bi-Lipschitz equivalence.
Now,  Darboux's Theorem implies that
every point $p$ in a sub-Riemannian $3$-manifold $M$ has a neighborhood that is bi-Lipschitz equivalent to an open set in the sub-Riemannian standard contact structure. 
The latter is   the sub-Riemannian Heisenberg group.
Since the Heisenberg group is dilation invariant, we also have that
the sub-Riemannian Heisenberg group      is the blow-up at any point of 
any sub-Riemannian $3$-manifold.
We can therefore conclude that every nilpotent Lie group $G$ of dimension $3$ has the property that, when it is endowed with a left invariant sub-Riemannian metric,   any element of $G$ has a neighborhood that is bi-Lipschitz homeomorphic to an open set in the blow-up space of $G$.

We remark that in the setting of Riemannian groups or of $3$-dimensional contact groups, the blow-up spaces may not preserve the algebraic structure of the original space. Examples in the Riemannian setting are easy to find, because there are diffeomorphic Lie groups that are not isomorphic. On the other hand, the sub-Riemannian roto-translation group is not isomorphic to its blow-up space, which is the Heisenberg group.

\subsection{Structure of the paper}
The article is organized as follows. In Section~\ref{notation} we fix notation and  state the results of the literature
that are the building blocks of our  work. In Section~\ref{ultra} we restate Theorem~\ref{ultralip}, which characterizes ultrarigidity in purely Lie theoretic terms. Then we give two examples of  ultrarigid Carnot groups. In Section~\ref{counter} we establish our main results. To begin we prove Theorem~\ref{main2}, which is a rigidity type theorem for  sub-Riemannian nilpotent Lie groups with ultrarigid tangent cone.  Secondly, we exhibit two example of a sub-Riemannian nilpotent Lie group with ultrarigid tangent cone. This together with Theorem~\ref{main2} imply Theorem~\ref{main} and Corollary~\ref{corollario}. In Section~\ref{section-proof-equivalence} we recall the definition of Tanaka prolongation of a stratified nilpotent Lie algebra and state a result of Tanaka. We then use this to
prove Theorem~\ref{ultralip}.

\section{Notation and preliminaries}\label{notation}
\subsection{Carnot Groups} \label{CarnotGroups}
Let $\G$ be a stratified nilpotent Lie group with identity $e_\G$ or $e$ if no confusion arises.
This means that its Lie algebra $\g$ admits an $s$-step stratification
$$\g= V_{1}\oplus \cdots\oplus V_{s}, $$
where $[V_{j}, V_{1}] =V_{j+1}$, for $1\leq j \leq s$, 
and with  $V_{s}\neq \{0\}$ and $V_{s+1}=\{0\}$. To avoid degeneracies, we assume $\g$ to have at least dimension two, which is reasonable for our purposes.

Given a point $p \in \G$ we denote by $\tau_p$ the left translation by $p$. 
An element $X$ in the Lie algebra $\g$ can be considered as a tangent vector at the identity.
Such a vector induces the left invariant vector field  given by $(\tau_p)_*|_e(X)$ at a point $p \in\G$.  This vector field will still be denoted by $X$, unless confusion might arise.  
 The set of all left invariant vector fields with the bracket operation is isomorphic to $\g$ and it inherits the   stratification of $\g$. The sub-bundle   $\mathcal{H} \subseteq TG$ where $\mathcal{H}_p= (\tau_p)_*|_e(V_{1})$ is called the {\it horizontal distribution}. A scalar product $\langle \, ,\, \rangle $ on $V_{1}$ defines a left invariant scalar product on each $\mathcal{H}_p$ by setting \begin{align}
\langle v, w \rangle_p=\langle (\tau_{p^{-1}})_*|_p(v),(\tau_{p^{-1}})_*|_p(w) \rangle  \label{scalarprod}
\end{align} for all $v,w \in \mathcal{H}_p$. The left invariant scalar product gives rise to a left invariant sub-Riemannian metric $d$ on $\G$, the definition of which we shall give later in the more general setting of sub-Riemannian manifolds. We call $(\G,d)$ a {\it Carnot group}, which we simply denote by $\G$ if no ambiguity arises.


We denote by ${\rm Aut}_0(\g)$ the Lie group of strata preserving automorphisms of $\g$. The Lie algebra of ${\rm Aut}_0(\g)$ is the space of strata preserving derivations of $\g$, which we denote by $\g_0$.  In general, for any stratified nilpotent Lie algebra, there are distinguished elements  of ${\rm Aut}_0(\g)$, which are called  {\em dilations} (or better  {\em algebra-dilations}). For each $\lambda \in \R$, the dilation $\delta_\lambda$ is defined linearly by setting $\delta_\lambda(X):=\lambda^jX$, for every $X\in V_{j}$  and every $j=1,\dots,s$.  The subset $\{\delta_\lambda \, |\,  \lambda\in \R\setminus\{0\} \}$ is called the {\em algebra-dilation group}. The set of dilations with positive factor constitutes a one parameter subgroup of ${\rm Aut}_0(\g)$, whose Lie algebra is generated by      the derivation $D \in \g_0$ defined by $D(X):=jX$, for every $X\in V_{j}$ and every $j=1,\dots,s$. In particular, for every $t \in \R$, we have 
$\delta_{e^t}=e^{tD}$.  
Since any (algebra-)dilation $\delta_\lambda$ is an algebra homomorphism and the Lie group $\G$ is simply connected, the dilation induces a unique homomorphism on the group, which we still 
 denote by $\delta_\lambda$
and call {\em group-dilation} or {\em dilation on the
group level}. 
Since  $\G$ is nilpotent and simply connected, the exponential map is a diffeomorphism. Thus,   group-dilations   $\delta_\lambda$ can be defined as
 $\delta_\lambda(p)=\exp \circ \, \delta_\lambda \circ \exp^{-1}(p)$, for all $\lambda\in \R$ and  $p \in \G$. 


 The group generated by the left translations and the group-dilations will play an important role in our considerations. We refer to this group as the {\it translation-dilation group}, and  denote it by 
 $${\rm TD}(\G):=  \left\{\tau_p\circ \delta_\lambda\;:\;p\in \G, \lambda \in \R\setminus\{0\}\right\} .$$

  
\subsection{Sub-Riemannian manifolds}

Throughout the paper, we shall write smooth when referring to $C^\infty$ functions, maps or vector fields.
A sub-Riemannian,  or Carnot-Carath\'eodory manifold, is a triple $(M,\mathcal{H},{\rm g} )$, where $M$ is a differentiable manifold, $\mathcal{H}$ is a bracket generating tangent sub-bundle of $M$, and ${\rm g}$ is a smooth section of the positive definite quadratic forms on $\mathcal{H}$. 

Let $m$ be $\dim  \mathcal{H}_p$, which is independent on $p\in M$.
Recall that being bracket generating means that, for every $p \in M$, there exists vector fields $  X_1,\dots,  X_{m}$ in $M$,    such that $\mathcal{H}_p={\rm span}\{  X_1(p),\dots,  X_{m}(p)\}$, and for some integer $s(p) \geq 1$,  
$$T_pM={\rm span}\, \left\{ \, [  X_{i_1},[  X_{i_2},[\dots, [  X_{i_{k-1}},  X_{i_k}] \dots ]]](p) \, :\, k=1,\dots,s(p), \qquad \right.$$
$$\qquad \qquad \qquad     \qquad \qquad \qquad \qquad \qquad \qquad \qquad \qquad
\left.i_j\in\{1,\ldots,m\}, \,j=1,\ldots,k
\, \right\}.$$ 

The bundle $\mathcal{H}$  is called the {\em horizontal distribution}.
The Carath\'eodory-Chow-Rashevsky Theorem shows that the bracket generating property implies that any two points in $M$ can be joined by a  {\em horizontal path}, i.e., an absolutely continuous path whose tangents belong to the horizontal distribution. It follows that a sub-Riemannian manifold carries a natural metric, called the  {\em sub-Riemannian} or  {\em Carnot-Carath\'eodory metric}, defined by setting
$$ d(p, q) := \inf\int_0^1 \sqrt{ {\rm g}_{ \gamma(t) }( \dot\gamma(t) , \dot\gamma(t))} dt ,$$
where the infimum is taken along all horizontal curves $\gamma: [0, 1] \to M$ such that $\gamma(0) = p$ and $\gamma(1) = q$. 

For Carnot groups, the tensor g is given by the left invariant scalar product (\ref{scalarprod}). Moreover, left translations are isometries, and $d$ is homogeneous with respect to the group-dilations, that is, $d(\delta_\lambda(p),\delta_\lambda(q))=|\lambda|d(p,q)$ for all $p,q \in \G$ and all $\lambda\in\R$.

The horizontal bundle induces a filtration of each $T_pM$ as follows:  Set $L_0=\{0\}$, let $L_1$ 
denote the set of all smooth sections of $\mathcal{H}$ defined on a neighbourhood of $p$, and by induction define
$L_{i+1}= L_i +[L_1,L_i]$, for $i>0$. It then follows that $$ L_0(p) \subset L_1(p) \subseteq \dots \subseteq L_{s(p)}(p)=T_pM$$ and $[L_i,L_j](p) \subseteq L_{i+j}(p)$. If $V_i(p):=L_{i}(p)/L_{i-1}(p)$, then the nilpotentisation of $T_pM$ is the vector space
 $$
 \g(p)=V_1(p)\oplus \dots \oplus V_{s(p)}(p).
 $$
  Since, for any $  X \in L_i$ and $  Y \in L_j$, one has that $$[  X+L_{i-1},  Y+L_{j-1}]=[  X,   Y] + L_{i+j-1},$$  the Lie bracket of vector fields  induces a well defined bracket on $\g(p)$ thus defining a stratified nilpotent Lie algebra of step $s(p)$. 
Since  $\g(p)$ is nilpotent, by the theory of nilpotent Lie groups, there exists a unique connected, simply connected Lie group $\G_p$ whose Lie algebra is 
$\g(p)$. We might denote this group by $\exp(\g(p))$.
The group $\G_p$ together with the sub-Riemannian metric $d_p$ induced by  $\langle \, , \, \rangle_p$, forms a Carnot group, which is  called the {\it tangent cone} at $p$.  
Indeed, by a theorem of Mitchell \cite{Mitchell}, the pointed metric spaces $(M,\lambda d,p)$ Gromov-Hausdorff converge, as $\lambda \to \infty$, to the pointed metric space
$(\G_p, d_p, e)$.
In other words, any blow-up space of $(M,d)$ at $p$ is isometric to the Carnot group  $\G_p$.

A sub-Riemannian manifold is called  {\em equiregular}\footnote{In Mitchell's and Margulis-Mostow's works, one finds the term `generic' instead of `equiregular'.} if the functions $p \mapsto {\rm dim}\, V_i(p)$ are constant for all $i$. Note that in this case the function $p \mapsto s(p)$ is automatically constant. The important consequence of equiregularity is that the Hausdorff dimension of $M$, with respect to $d$, is the natural number $Q=\sum_{i=1}^{s(p)} i \, {\rm dim}\, \g_i(p)$. Moreover,  on any compact subset of $M$, the $Q$-dimensional Hausdorff measure is commensurate with any Lebesgue measure, see~\cite{Mitchell}.

If the nilpotentisations are independent of $p$ and thus isomorphic to a fixed Lie algebra $\g$, then $(M,\mathcal{H})$ is said to be  {\em strongly regular} and $\g$ is called the {\em symbol algebra} of $(M,\mathcal{H})$. Clearly the tangent cones are all isomorphic to $\G=\exp(\g)$ in this case.

\subsection{Contact and quasiconformal maps}

Let $(M,\mathcal{H})$ and $(M',\mathcal{H}')$ be manifolds with horizontal distributions. Let $U\subseteq M$ be an open set. A local $C^1$ diffeomorphism $f:U \to M'$ is called a {\it contact map} if $f_*(\mathcal{H}_p)=\mathcal{H}'_{f(p)}$. In particular, left translations and dilations are contact maps of a Carnot group to itself. 

 Let $(M,d)$ and  $(M',d')$ be metric spaces and let $U\subseteq M$ be an open set.  Let $f :U \rightarrow M'$  be a (topological) embedding. For $p\in M$ and for small $t\in \R$, we define the distortion function by
$$
H_{f}(p,t):= \frac{{\rm sup}\{d'(f(p),f(q)) | d(p,q) \leq t\}}{{\rm inf}\{d'(f(p),f(q))|d(p,q)\geq t\}}.
$$
\begin{definition}\label{qcdef1}
We say that $f$ is  $K$-{\em quasiconformal} for some $K\geq 1$ if  $$\limsup_{t\rightarrow 0} H_f (p,t)\leq K,$$ for all $p\in M$. 
\end{definition}

In particular, left translations and dilations are $1$-quasiconformal maps of a Carnot group to itself. 

Quasiconformal maps between Carnot groups  $(\G,d)$ and  $(\G',d')$ are Pansu differentiable almost everywhere with respect to any Haar measure, see \cite[Th\`eor\'eme 2]{pansu1}.  
 One such Haar measure is 
 the $Q$-dimensional Hausdorff measure, where $Q=\sum_{i=1}^s i \, {\rm dim}\, V_{i}$ is the homogeneous dimension. 

We recall that a continuous map $f:\G \to \G'$ is Pansu differentiable at $p \in \G$ if the limit
$$
 Df(p)(q)=\lim_{t\rightarrow 0^+} \delta_t^{-1}\circ \tau_{f(p)}^{-1}\circ f \circ \tau_{p}\circ \delta_t(q)
$$ 
is uniform on compact sets and equals  a homomorphism $Df(p):\G \to \G'$.
 We call  $Df(p)$ the {\em Pansu derivative} of $f$ at $p$. The {\em Pansu differential} is the Lie algebra homomorphism  $df(p):\g \to \g'$ such that $Df(p) \circ \exp=\exp \circ \, df(p)$. Note that $Df(p)$ and $df(p)$ commute with dilating and so in particular, $df(p)$ is a strata preserving  Lie algebra homomorphism.

The following results will be important for our purposes.
\begin{theorem}[L. Capogna, M. Cowling, \cite{Capogna-Cowling}]\label{Capogna-Cowling} All
$1$-quasiconformal maps between Carnot groups are smooth.
\end{theorem}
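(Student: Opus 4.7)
The plan is to use the fact that the $1$-quasiconformal condition forces $f$ to satisfy, in a weak sense, a sub-elliptic ``conformal'' PDE system, and to extract smoothness by sub-elliptic hypoellipticity. The three main steps are: (i) translate pointwise $1$-quasiconformality into an algebraic conformality condition on the Pansu differential; (ii) re-interpret this as a weak PDE system for $f$ in the horizontal Sobolev class; (iii) run a sub-elliptic regularity bootstrap.

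For step (i), I would invoke Pansu's differentiability theorem: $f$ is Pansu differentiable at almost every $p$, with $df(p)\colon \g\to\g'$ a strata-preserving Lie algebra isomorphism. The $1$-quasiconformal condition forces $df(p)$ to be a \emph{similarity}, i.e.\ the composition of an algebra-dilation $\delta_\lambda$ with a graded isometric isomorphism. After fixing orthonormal horizontal frames $X_1,\ldots,X_m$ on $\G$ and $X_1',\ldots,X_m'$ on $\G'$, this becomes the pointwise condition that the matrix $(X_i f_j)$ of horizontal derivatives lies in the conformal subgroup of the relevant linear group, i.e.\ an overdetermined first-order nonlinear PDE system for $f$.

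For step (ii), I would use that quasiconformal maps between Ahlfors $Q$-regular metric measure spaces belong to the horizontal Sobolev class $HW^{1,Q}_{\mathrm{loc}}$, with $Q$ the homogeneous dimension, so the conformality system is satisfied weakly. When $K=1$, moreover, $f$ minimizes the horizontal $Q$-energy, and the corresponding Euler-Lagrange equations express each component $f_j$ as a weak solution of a quasilinear horizontal $Q$-Laplace system. For step (iii), I would bootstrap as follows: H\"ormander's sum-of-squares theorem gives hypoellipticity of the sub-Laplacian $\sum X_i^2$; combining this with Moser iteration and Morrey-Campanato estimates adapted to the Carnot-Carath\'eodory distance and its doubling $Q$-dimensional Hausdorff measure, one first extracts horizontal H\"older continuity of weak solutions, then $C^{1,\alpha}$ regularity by freezing coefficients in the nonlinear system, and finally $C^\infty$ by iterating the linearized bootstrap using the extra structure of the conformal system.

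The main obstacle is the initial H\"older step. The horizontal $Q$-Laplace system is degenerate elliptic (its Euclidean analogue is the $p$-Laplacian with $p=Q$), and in the Carnot setting one must simultaneously handle the non-commutativity of the $X_i$, which produces commutator terms and higher-order vertical derivatives at every differentiation, and the non-quadratic growth of the nonlinearity. Once H\"older regularity is secured, the linearization of the conformal system at $f$ is a linear hypoelliptic operator of sum-of-squares type, and standard H\"ormander-type bootstraps drive $f$ to $C^\infty$.
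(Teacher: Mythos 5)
This theorem is imported into the paper from Capogna--Cowling \cite{Capogna-Cowling} and is not proved there, so there is no internal argument to compare against; I am measuring your sketch against the cited proof. Your steps (i) and (ii) are indeed its opening moves: Pansu differentiability together with $K=1$ forces $df(p)$ to be a similarity almost everywhere, $f$ lies in the horizontal Sobolev class $HW^{1,Q}_{\mathrm{loc}}$, and the pullbacks of the horizontal coordinate functions are weak solutions of the horizontal $Q$-Laplace equation (these coordinates are themselves $Q$-harmonic because their horizontal gradients are constant).

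The genuine gap is in step (iii). Weak solutions of the horizontal $Q$-Laplacian are \emph{not} smooth in general --- already in $\R^n$ the $p$-Laplacian yields only $C^{1,\alpha}$, and in a general Carnot group even horizontal $C^{1,\alpha}$ regularity of $p$-harmonic functions is not a citable fact --- so the chain ``Moser iteration, then $C^{1,\alpha}$ by freezing coefficients, then $C^\infty$ by a linearized bootstrap'' cannot be run on the $Q$-Laplace system as such; the degeneracy of the nonlinearity where the horizontal gradient is small or large blocks it. The mechanism that actually unlocks smoothness in Capogna--Cowling is that $1$-quasiconformality makes the coefficient $|\nabla_{H}(x_j\circ f)|^{Q-2}$ comparable to a power of a single scalar, the conformal factor $\lambda(p)$ (equivalently the Pansu Jacobian), and a Harnack-type argument shows that $\lambda$ is locally bounded above and away from zero. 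Only after this two-sided non-degeneracy is established does the system become uniformly subelliptic, at which point the H\"ormander-type hypoelliptic bootstrap you describe applies and delivers $C^\infty$. Your phrase ``using the extra structure of the conformal system'' gestures at this, but the local two-sided bound on the conformal factor is the essential step of the proof and is absent from the proposal.
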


Furthermore, by \cite[Corollary 7.2]{Capogna-Cowling} we also have the following Lemma.

\begin{lemma}\label{dfsmooth}
If $f$ is a quasiconformal embedding such that $df(p)=\delta_{\lambda(p)}$ for almost all points $p$ in its domain of definition, then $f$ is $1$-quasiconformal and therefore smooth.  
\end{lemma}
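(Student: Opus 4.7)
The plan is essentially a reduction: I would combine Corollary~7.2 of Capogna--Cowling with the smoothness Theorem of Capogna--Cowling already stated just above. The content of the argument is to check that the hypothesis ``$df(p)=\delta_{\lambda(p)}$ almost everywhere'' is precisely the infinitesimal conformality condition handled by that corollary, and then invoke the smoothness theorem once $1$-quasiconformality has been established.

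First I would observe that, by the homogeneity of the Carnot--Carath\'eodory distance under dilations, any dilation $\delta_{\lambda(p)}\colon \G\to\G$ is itself $1$-quasiconformal, since $d(\delta_\lambda x,\delta_\lambda y)=|\lambda|\,d(x,y)$ makes the distortion identically equal to $1$. Dually, a strata-preserving Lie algebra homomorphism that acts on $V_1$ as a scalar multiplication by $\lambda$ is forced, by $[V_1,V_{j-1}]=V_j$, to act on $V_j$ by $\lambda^j$, hence coincides with $\delta_\lambda$. Consequently, the hypothesis $df(p)=\delta_{\lambda(p)}$ a.e.\ is equivalent to saying that the Pansu derivative is conformal on the horizontal distribution at almost every point, which is exactly the infinitesimal assumption of Corollary~7.2 in \cite{Capogna-Cowling}.

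Next I would apply that corollary: it asserts that a quasiconformal embedding between Carnot groups whose Pansu derivative is infinitesimally conformal almost everywhere is in fact $1$-quasiconformal. The mechanism is that at every point of Pansu differentiability one controls the distortion $H_f(p,t)$ by the operator-norm ratio of $Df(p)$, using the uniform-on-compacts convergence of the rescaled maps $\delta_t^{-1}\circ\tau_{f(p)}^{-1}\circ f\circ\tau_p\circ\delta_t$ to $Df(p)$; combined with a modulus-of-curve-families argument, this promotes the a.e.\ pointwise bound $H_f(p)=1$ to the global bound $K=1$. Once $f$ is $1$-quasiconformal, the preceding Theorem of Capogna--Cowling gives smoothness, which closes the proof.

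The step I would expect to be the true obstacle, were one not permitted to cite \cite{Capogna-Cowling} directly, is precisely this passage from infinitesimal conformality on a full-measure set to global $1$-quasiconformality: the delicate point is the absolute continuity of $f$ along almost every horizontal curve, which is needed to integrate the pointwise derivative estimate into a genuine metric distortion bound. Since both the Capogna--Cowling theorem and its Corollary~7.2 are taken as given here, the lemma follows by assembling them as above.
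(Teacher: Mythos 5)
Your proposal matches the paper's treatment: the lemma is stated there as an immediate consequence of Corollary~7.2 of \cite{Capogna-Cowling} together with Theorem~\ref{Capogna-Cowling}, with no independent argument given. Your additional remarks on why the a.e.\ hypothesis $df(p)=\delta_{\lambda(p)}$ is the infinitesimal conformality condition of that corollary are correct and simply make explicit what the paper leaves implicit.
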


\subsection{Quasiconformal equivalence}  
 
 The Pansu differential of a quasiconformal map is a graded group isomorphism. Consequently, we have the following fact.
\begin{theorem} [P. Pansu, \cite{pansu1}]  \label{ThmPansu}
Two Carnot groups are quasiconformally equivalent if and only if they are isomorphic as groups.
\end{theorem}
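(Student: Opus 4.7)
The plan is to prove both directions of the equivalence separately. For the easier implication, I assume $\G$ and $\G'$ are isomorphic as stratified Lie groups via an isomorphism $\Phi : \G \to \G'$. Then $d\Phi|_e$ is a strata preserving Lie algebra isomorphism $\g \to \g'$, so in particular it maps $V_1$ onto $V_1'$. Pushing the scalar product on $V_1$ forward through $d\Phi|_e$ produces a scalar product on $V_1'$ for which $\Phi$ is an isometry of the resulting Carnot-Carath\'eodory metrics. Since any two scalar products on the finite-dimensional space $V_1'$ induce bi-Lipschitz equivalent sub-Riemannian distances on $\G'$, the map $\Phi$ is bi-Lipschitz, and hence quasiconformal, with respect to the originally given metrics.

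For the converse, suppose $f : U \to \G'$ is a quasiconformal embedding from an open $U \subset \G$. By the Pansu differentiability theorem cited above, $f$ is Pansu differentiable almost everywhere, and at each such $p$ the differential $df(p) : \g \to \g'$ is a strata preserving Lie algebra homomorphism. The key additional ingredient, stated immediately before the theorem, is that quasiconformality forces $df(p)$ to be a graded group isomorphism for almost every $p$. Granting this, I pick any such point $p_0$; the corresponding strata preserving Lie algebra isomorphism $\g \cong \g'$ exponentiates uniquely, since $\G$ and $\G'$ are simply connected, to an isomorphism of stratified Lie groups $\G \to \G'$.

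The main obstacle is the non-degeneracy of $df(p)$. The idea is that the quasiconformality condition $\limsup_{t \to 0} H_f(p,t) \leq K$ controls the ratio between the outer and inner radii of the image of a small ball $B(p,t)$ under $f$, and the rescaling $\delta_{1/t} \circ \tau_{f(p)}^{-1} \circ f \circ \tau_p \circ \delta_t$ used in the definition of the Pansu derivative transfers this bilateral control to $Df(p)$. Consequently $Df(p)$ can neither collapse a nontrivial subspace to zero nor send bounded sets to unbounded ones, which rules out degeneracy; a dimension count, also forced by the finiteness of distortion in both directions, then upgrades non-degeneracy to bijectivity. Once this is in hand the remaining deduction -- passing from a single Pansu differential to an isomorphism of the ambient groups -- is purely algebraic and short.
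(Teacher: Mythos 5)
The paper does not prove this statement at all: it quotes it as Pansu's theorem, deriving it in one line from the fact, established in \cite{pansu1}, that the Pansu differential of a quasiconformal map is almost everywhere a \emph{graded group isomorphism}. Your proposal follows the same outline (differentiate, obtain a strata preserving isomorphism at one point, exponentiate), and your treatment of the easy direction is fine modulo the standard remark that two Carnot groups isomorphic as Lie groups admit a strata preserving isomorphism. But the step you flag as ``the main obstacle'' --- non-degeneracy of $Df(p)$ almost everywhere --- is exactly the hard content of Pansu's theorem, and your argument for it does not work. The quantity $H_f(p,t)$ controls only the \emph{ratio} of the outer and inner radii of $f(B(p,t))$, not their absolute scale relative to $t$. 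Both radii can be $o(t)$ simultaneously, in which case the rescaled maps $\delta_{1/t}\circ\tau_{f(p)}^{-1}\circ f\circ\tau_p\circ\delta_t$ converge to the trivial homomorphism with no violation of quasiconformality at $p$. Already on $\R^2$ the radial map $x\mapsto |x|^2x$ is $1$-quasiconformal in the sense of Definition \ref{qcdef1} at the origin yet has vanishing derivative there; so pointwise bounded distortion simply does not rule out a degenerate differential at a given point.

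What actually rules it out is a measure-theoretic argument: one must show that the set where the differential degenerates has measure zero, using the analytic machinery of quasiconformal maps (absolute continuity on lines, the area formula, the distortion inequality $\|Df\|^Q\le K J_f$, and the fact that the inverse homeomorphism is also quasiconformal, so that $J_f$ cannot vanish on a set of positive measure). None of this is recoverable from the infinitesimal distortion bound alone, and your ``dimension count forced by finiteness of distortion'' for surjectivity has the same problem. If you intend to cite Pansu's differentiability theorem as a black box, you should cite it in the strong form that includes the a.e.\ isomorphism property of the differential --- which is what the paper does --- rather than attempt to re-derive that property from the definition of quasiconformality.
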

In particular, when  $f$ is an embedding of an open set $U\subset \G$ into $\G$ itself,  we have that $df(p)\in {\rm Aut}_0(\g)$, for almost every $p\in U$. Furthermore, the set of smooth contact maps of $\G$ into itself coincides with the set of smooth Pansu differentiable maps  of $\G$ to itself, see \cite{Warhurst-2}. 

Theorem~\ref{ThmPansu} was later generalised by Margulis and Mostow.
\begin{theorem} [G. Margulis, G. Mostow, \cite{Margulis-Mostow}, \cite{Margulis-Mostow2}]  \label{ThmMargMost}
Let $(M,\mathcal{H},{\rm g})$ and $(M',\mathcal{H}',{\rm g}')$ be equiregular sub-Riemannian  manifolds. 
Any quasiconformal embedding  $ f :U \subseteq M \to M'$ of an open set  $U$ of $M$     induces, at almost every point $p\in U$, an isomorphism
$$Df(p):\G_p \to \G'_{f(p)},$$
between the tangent cones of $M$ and $M'$. 
\end{theorem}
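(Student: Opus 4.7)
The plan is to prove the theorem by blowing up $f$ at a generic point $p\in U$, extracting a limit of the rescaled maps, and showing this limit has enough structure to be a graded isomorphism of tangent cones. First, I would choose privileged (normal) coordinates around $p$ and around $f(p)$ so that the sub-Riemannian dilations $\delta_\lambda$ and translations $\tau_p$ are defined in a neighborhood of the basepoints. By Mitchell's theorem, the pointed spaces $(M,\lambda_j d, p)$ converge in the pointed Gromov--Hausdorff sense to $(\G_p, d_p, e)$ as $\lambda_j\to\infty$, and analogously for $M'$ at $f(p)$. In these coordinates I define the rescaled embeddings
$$f_j(q) \;:=\; \delta_{\lambda_j}^{-1}\circ \tau_{f(p)}^{-1}\circ f \circ \tau_p \circ \delta_{\lambda_j}(q).$$
The central observation is that each $f_j$ is $K$-quasiconformal with the same constant $K$ as $f$, because both the sub-Riemannian dilations and left translations are $1$-quasiconformal in the limiting Carnot group (and asymptotically so in $M$ and $M'$).

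Second, since a $K$-quasiconformal embedding between spaces of bounded geometry is uniformly quasisymmetric on compact sets, the family $\{f_j\}$ is equicontinuous (after normalizing $f_j(e)=e$). I would apply Arzel\`a--Ascoli to extract a subsequential limit $Df(p):\G_p\to \G'_{f(p)}$, which is automatically a $K$-quasiconformal embedding between Carnot groups. By construction $Df(p)$ commutes with the dilations in the sense $Df(p)\circ \delta_t = \delta_t\circ Df(p)$, so it is strata-preserving once we know it is a group morphism.

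Third, and this is the main analytic step, I would prove that at almost every $p\in U$ the limit $Df(p)$ is (i) independent of the rescaling sequence, and (ii) a group homomorphism. The key tool is a Lebesgue-type differentiation theorem applied to measurable invariants of $f$: the infinitesimal distortion $p\mapsto \limsup_t H_f(p,t)$ and the volume derivative $p\mapsto \limsup_r \M_{Q}(f(B(p,r)))/\M_Q(B(p,r))$. At almost every $p$, these are both finite and attained as honest limits, and this is enough to show that $f_j$ converges without subsequencing and that the limit satisfies $Df(p)(xy)=Df(p)(x)Df(p)(y)$; the homomorphism property comes from the fact that, in the limit, translations by points near $p$ become translations in $\G_p$, and quasiconformal maps with good derivatives intertwine them. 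Finally, the isomorphism property (surjectivity and non-degeneracy of $Df(p)$) follows because the $K$-quasiconformality forces $Df(p)$ to distort the homogeneous $Q$-dimensional Hausdorff measure by a finite nonzero factor, so it cannot collapse a stratum, and the graded homomorphism of Carnot groups with non-vanishing Jacobian is automatically a graded isomorphism; in particular $\G_p\cong \G'_{f(p)}$.

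The hardest step is item (iii): establishing almost-everywhere existence of the Pansu derivative across varying tangent cones. In the Carnot group case this is Pansu's Rademacher-type theorem, which exploits the group homogeneity to reduce differentiation to a measure-theoretic statement. In the equiregular sub-Riemannian setting the tangent cones $\G_p$ vary with $p$, so one cannot directly apply Pansu's theorem; instead the equiregularity assumption must be used to build a measurable trivialization of the bundle of tangent cones and to control the error between the sub-Riemannian dilations in $M$ and the algebraic dilations in $\G_p$. Handling this error uniformly over a full-measure set is where the technical weight of the proof lies, and it is the core contribution of Margulis and Mostow beyond Pansu's original argument.
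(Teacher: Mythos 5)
This statement is not proved in the paper: it is an attributed citation of Margulis--Mostow, used as a black box, so there is no internal proof to compare your argument against. Judged on its own terms, your outline correctly reproduces the broad strategy of the original papers (blow up $f$ in privileged coordinates, extract a limit by compactness, show the limit is a.e. a graded isomorphism), but as a proof it has genuine gaps precisely at the points you gesture past.

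Two of these deserve to be named. First, you assert that each rescaled map $f_j$ is $K$-quasiconformal with the same constant and that ``a $K$-quasiconformal embedding between spaces of bounded geometry is uniformly quasisymmetric on compact sets.'' The passage from the infinitesimal distortion bound of Definition~\ref{qcdef1} to uniform local quasisymmetry is itself a substantial theorem; it is false for general metric spaces and in the Carnot--Carath\'eodory setting requires Loewner/Poincar\'e-inequality machinery or the specific arguments of Margulis--Mostow. Without it, Arzel\`a--Ascoli does not apply and no limit map exists. Second, the heart of the theorem --- that at almost every $p$ the rescalings converge without passing to a subsequence and that the limit is a group homomorphism --- is dispatched with ``this is enough to show that $f_j$ converges without subsequencing and that the limit satisfies $Df(p)(xy)=Df(p)(x)Df(p)(y)$.'' That implication is the entire technical content of the theorem (you acknowledge as much in your final paragraph), and a Lebesgue differentiation argument applied to the distortion and volume derivative does not by itself yield either conclusion; one must control, uniformly on a full-measure set, the discrepancy between the coordinate dilations on $M$ and the algebraic dilations of $\G_p$ as the tangent cone varies with $p$. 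So your text is an accurate roadmap of the known proof rather than a proof; for the purposes of this paper the correct move is simply to cite \cite{Margulis-Mostow} and \cite{Margulis-Mostow2}, as the authors do.
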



One might wonder if the converse of the Margulis-Mostow Theorem holds true.
Alas, we show that it is not the case. Indeed, we exhibit two sub-Riemannian manifolds that at every point have the same fixed Carnot group as tangent cone. Then we prove that they are not quasiconformally equivalent.
In fact, we find such examples among the class of nilpotent Lie groups.

\section{Ultrarigid groups}\label{ultra}
\subsection{Definition of ultrarigidity} 
In this section we present a class of groups that we shall consider in proving the main theorem.
Such groups have the property of having very few quasiconformal maps.
Notice that left translations and group-dilations are always present.
In fact, we are interested in the case when these are the only quasiconformal maps.
  Theorem \ref{ultra-rigidity} gives an algebraic characterization of such a situation and will be proved in Section \ref{section-proof-equivalence}. 

 Because of Theorem \ref{ultra-rigidity},  the notion of ultrarigid group may be defined in two equivalent manners.

 
\begin{definition}
A Carnot group $\G$ is said to be {\em ultrarigid}  if  one of the two equivalent properties [\ref{ultra-rigidity}.1] and [\ref{ultra-rigidity}.2] of Theorem \ref{ultra-rigidity} holds. 
\end{definition}

\begin{remark}
Property  [\ref{ultra-rigidity}.2] has been considered by Pierre Pansu. He proved that  there exist uncountably many groups with such property, see \cite[Proposition 13.1]{pansu1}. This existence result of Pansu does not yield an example which suits our purpose.
It is important to recall that Pansu mainly considered more general groups.
Namely, we call {\em Pansu-rigid} those Carnot groups $\G$ for which any strata-preserving automorphism of Lie$(\G)$ is a similarity, i.e., the composition of a dilation and an isometry.
One of the main steps in proving Mostow rigidity for quaternionic hyperbolic spaces is to show that the boundary at infinity of such spaces are Pansu-rigid, see \cite[Proposition 10.1]{pansu1}. Clearly, any ultrarigid group is Pansu-rigid.
\end{remark}

\begin{remark}\label{capcowstep2}
For Carnot groups of step $2$, the nontrivial part of Theorem \ref{ultra-rigidity} has been proved by Capogna and Cowling, see \cite[Corollary 7.4]{Capogna-Cowling}.
\end{remark}

\subsection{Examples of ultrarigid groups}
In this section we present two examples of ultrarigid groups. The ultrarigidity  can be verified by explicit computation of the strata preserving automorphisms using the MAPLE LieAlgebras package. 

In Section \ref{non-Carnot} we shall need an example of an ultrarigid group whose structure can be deformed to a nonstratified nilpotent Lie group. The following Lie algebra determines an ultrarigid group  having this flexibility.

\begin{example}\label{ex1} Consider the sixteen dimensional Lie algebra with basis $\{e_i \, | \, i=1,\ldots,16\}$ and bracket relations:
\begin{align*}
 [e_1, e_2] &= e_{11},  &[e_1, e_3] &= e_{13},  &[e_1, e_4] &= e_{14}, \\
 [e_1, e_5] &= e_{15},  &[e_1, e_6] &= e_{16},  &[e_2, e_3] &= e_{13}, \\
 [e_2, e_5] &= e_{12},  &[e_2, e_6] &= e_{14},  &[e_3, e_5] &= e_{12}, \\
 [e_3, e_6] &= e_{13},  &[e_3, e_7] &= e_{14},  &[e_4, e_5] &= e_{12}, \\
 [e_4, e_6] &= e_{13},  &[e_4, e_8] &= e_{14},  &[e_5, e_6] &= e_{13}, \\
 [e_5, e_8] &= e_{12},  &[e_5, e_9] &= e_{14},  &[e_6, e_8] &= e_{12}, \\
 [e_6, e_9] &= e_{13},  &[e_6, e_{10}] &= e_{14}, &[e_7, e_8] &= e_{14}, \\
 [e_7, e_9] &= e_{12},  &[e_7, e_{10}] &= e_{13}, &[e_8, e_9] &= e_{13}, \\
 [e_8, e_{10}] &= e_{14},  &[e_9,e_{10}] &=- e_{12}, 
\end{align*} 
We note that this is a $2$-step Carnot algebra with stratification $V_{1}={\rm span}\, \{e_1,\dots,e_{10}\}$ and $V_{2}={\rm span}\, \{e_{11},\dots,e_{16}\}$. It can be deformed to a nonstratified nilpotent Lie algebra by adding the additional bracket $[e_1,e_{11}]=e_{14}$.
 \end{example}

\begin{example} Our second example is the seventeen dimensional group corresponding to the Lie algebra obtained by extending the previous example by adding the additional bracket $[e_1,e_{11}]=e_{17}$.
The strata are as follows: $V_{1}={\rm span}\, \{e_1 ,\dots, e_{10}\}$, $V_{2}={\rm span}\, \{e_{11},\dots, e_{16}\}$ and  $V_{3}={\rm span}\, \{e_{17}\}$. The significance of this example is that it shows that Theorem \ref{ultra-rigidity} does extend the result discussed in Remark \ref{capcowstep2}. 
\end{example}


\section{A counterexample}\label{counter}

\subsection{A criterion for quasiconformal nonequivalence}
In this section we show that sub-Riemannian nilpotent groups with same tangent cones need not to be quasiconformally equivalent.
The main theorem of the section is Theorem \ref{main2}, where ultrarigidity is assumed.
Namely, we deal with a Carnot group $\G$ whose only quasiconformal maps are the elements of  TD$(\G)$.

We start by showing that, in general, if a nilpotent subgroup $H$ of TD$(\G)$ has codimension one, then $H$ is in fact $\G$. To this end, let us study the group structure of TD$(\G)$. Composition of functions turns TD$(\G)$ into a Lie group that is isomorphic to a semidirect product $\G \rtimes \R$. 
Thus the Lie algebra of TD$(\G)$ is a semidirect product of $\g$ with a one dimensional subgroup:
$$ \Lie ({\rm TD}(\G)) = \g \rtimes \R.$$
Here the $\R$-factor is generated  by the derivation $D$ and the brackets in Lie$ ({\rm TD}(\G)) $ are those of $\g$  together with 
$$[D,X] = D(X), \qquad \forall X\in \g.$$

\begin{lemma}\label{HisG}
Let $\G\neq \R$ be a  Carnot group. Let $H< {\rm TD}(\G) $ be a Lie subgroup of codimension $1$ and
assume that $H$ is nilpotent. Then $H=\G\times \{0\}$.
\end{lemma}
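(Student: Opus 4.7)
The plan is to pass to the Lie algebra level, where $\mathfrak{h} := \Lie(H)$ is a codimension-$1$ subalgebra of $\Lie({\rm TD}(\G)) = \g \rtimes \R D$, and to show that $\mathfrak{h} = \g$. From this equality the conclusion $H = \G$ follows because $\G = \exp(\g)$ is simply connected, so the identity component $H^0$ equals $\G$, and nilpotency of $H$ rules out $H$ being a proper extension of $\G$ by a nontrivial dilation inside ${\rm TD}(\G)$.

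I would split into the easy case $\mathfrak{h} \subset \g$, where a dimension count forces $\mathfrak{h} = \g$ immediately, and the substantive case $\mathfrak{h} \not\subset \g$. In the latter, $\mathfrak{h} + \g = \g \rtimes \R D$, so after rescaling the $D$-component one finds $D + X_0 \in \mathfrak{h}$ for some $X_0 \in \g$; simultaneously, $\mathfrak{h} \cap \g$ has codimension $1$ in $\g$, hence is nonzero because $\dim \g \geq 2$. Pick a nonzero $Y \in \mathfrak{h} \cap \g$, decompose $Y = Y_1 + \cdots + Y_s$ with $Y_j \in V_j$, and let $k \geq 1$ be the smallest index with $Y_k \neq 0$.

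The key technical input is a grading argument showing that the $V_k$-component of $(\ad(D + X_0))^n(Y)$ equals $k^n Y_k$ for every $n \geq 0$, so the iterates never vanish. Indeed, for any $Z \in \g$ supported in strata of index $\geq k$, the decomposition $\ad(D + X_0)(Z) = \ad(D)(Z) + \ad(X_0)(Z)$ has the first summand still supported in strata of index $\geq k$ with $V_k$-part exactly $kZ_k$, while the second lies in $V_{k+1} \oplus \cdots \oplus V_s$ because $X_0 \in V_1 \oplus \cdots \oplus V_s$ and $[V_i, V_j] \subseteq V_{i+j}$ with $i \geq 1$. An induction on $n$ then delivers the claim. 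Since $(\ad(D+X_0))^n(Y) \in \mathfrak{h}$ for all $n$ (as $\mathfrak{h}$ is a subalgebra), the nonvanishing contradicts Engel's theorem, which applied to the nilpotent $\mathfrak{h}$ forces $\ad(D+X_0)|_{\mathfrak{h}}$ to be a nilpotent endomorphism. The main obstacle is conceptual rather than computational: once one recognizes that $\ad(D + X_0)$ preserves the lowest-degree filtration of $\g$ and acts on $V_k$ by the nonzero scalar $k$, the contradiction with nilpotency is immediate.
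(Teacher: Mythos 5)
Your proof is correct and follows essentially the same route as the paper's: both arguments extract an element $D+X_0\in\mathfrak{h}$ with nonzero $D$-component together with a nonzero element of $\mathfrak{h}\cap\g$, and contradict nilpotency of $\mathfrak{h}$ by showing that the iterated adjoint $(\ad_{D+X_0})^n$ keeps a nonvanishing lowest-stratum component (the $\g$-part of $D+X_0$ only raises strata while $D$ acts by a nonzero scalar). The only cosmetic difference is that the paper takes its test element in $V_{1}\cap\mathfrak{h}$, which is nonzero since $\dim V_{1}\geq 2$, whereas you take an arbitrary nonzero $Y\in\mathfrak{h}\cap\g$ and track its lowest nonzero stratum $V_k$, on which $\ad_{D+X_0}$ acts by the scalar $k$.
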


\begin{proof}
Denote by $\mathfrak{h}$, $\g$, and $\g\rtimes \R$ the Lie algebras of $H$, $G$, and  TD$(\G)$, respectively.
Thus we have $\dim \mathfrak{h}=\dim \mathfrak{g}=n$ and $\dim(\g\rtimes \R)=n+1$.
Let $V_{1}$ be the first layer of $\g$, for which we recall that $\dim V_{1}\geq 2$.
Hence we get
$$\dim(V_{1}\cap \mathfrak{h}) =\dim V_{1}+\dim \mathfrak{h} - \dim (V_{1}+ \mathfrak{h})$$
$$\geq 2+n-(n+1)=1.$$
Thus there exists $X\in V_{1}\cap \mathfrak{h}$ with $X\neq0$.

We note that if $V_{1}\subseteq  \mathfrak{h}$, then $\g\subseteq  \mathfrak{h}$ and so $\G=H$.  Now consider the case  $V_{1}\setminus  (V_{1} \cap \mathfrak{h}) \neq\emptyset$ and let $Y$ be a nonzero element in $V_{1} \setminus (V_{1} \cap \mathfrak{h})$. Then, since $ \mathfrak{h}$ has codimension $1$ in $\g\rtimes \R$, we have that
$\g\rtimes \R={\rm span}\{    \mathfrak{h}, Y\}$.
Thus there exist $Z\in   \mathfrak{h}$ and $\alpha\in\R$ such that $D=Z+\alpha Y$.
Notice that $Z\neq 0$, otherwise $D=\alpha Y\in V_{1}$, which is not true.
Therefore, the element $D-\alpha Y$ is in $ \mathfrak{h}\setminus \{0\}$, and since $D$ preserves strata, there exists $Z_m \in V_{2}\oplus \cdots\oplus V_{s}$ such that
$$\left({\rm ad}_{D-\alpha Y}\right)^m(X)=X+Z_m, \qquad \forall m\in \N.$$
However, since $D-\alpha Y$ and $X$ are both elements of the nilpotent algebra $\mathfrak{h}$, the iterated bracket $\left({\rm ad}_{D-\alpha Y}\right)^m(X)$ should be eventually $0$, which contradicts the fact that $X\ne0$. Thus we conclude that
   $V_{1}\setminus  (V_{1} \cap \mathfrak{h}) \neq\emptyset$ cannot occur.
\end{proof}

Recall that, unless otherwise said, a Carnot group is always equipped with a  left invariant  sub-Riemannian distance with respect to the first stratum as horizontal distribution. 


\begin{theorem}\label{main2}
Assume $\G$ is an ultrarigid Carnot group.
Let $H$ be a connected, simply connected nilpotent Lie group endowed with a left invariant  sub-Riemannian distance.
If there exist open sets $U\subset \G$, $U'\subset H$, and a quasiconformal homeomorphism between $U$ and $U'$, then  $H$ is isomorphic to $\G$.
\end{theorem}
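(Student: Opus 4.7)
The plan is to produce, out of the quasiconformal map $f : U \to U'$, a smooth local Lie group embedding $\Phi$ of $H$ into $\mathrm{TD}(\G)$ whose differential at the identity sends $\mathrm{Lie}(H)$ onto a nilpotent codimension one subalgebra of $\g \rtimes \R = \mathrm{Lie}(\mathrm{TD}(\G))$. The key Lemma \ref{HisG} will then identify that subalgebra with $\g$, so $d\Phi_e$ becomes a Lie algebra isomorphism; since $H$ and $\G$ are both connected and simply connected, this will yield $H \cong \G$.

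To build $\Phi$, I would exploit that each left translation $L_h^H$ on $H$ is an isometry, hence $1$-quasiconformal. For $h$ in a suitable neighborhood of $e_H$ the composition
$$
g_h := f^{-1} \circ L_h^H \circ f
$$
is a quasiconformal embedding of a nonempty open subset $V_h \subset U$ into $\G$. By the ultrarigidity hypothesis [\ref{ultra-rigidity}.1], $g_h$ is the restriction of a unique element $\Phi(h) \in \mathrm{TD}(\G)$ (uniqueness from real analyticity of the maps in $\mathrm{TD}(\G)$). The identity $L_{h_1 h_2}^H = L_{h_1}^H \circ L_{h_2}^H$ propagates via analytic continuation to $\Phi(h_1 h_2) = \Phi(h_1)\,\Phi(h_2)$, so $\Phi$ is a local group homomorphism. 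Writing $\Phi(h) = \tau_{p(h)} \circ \delta_{\lambda(h)}$ and using that for each fixed $q$ the map $h \mapsto g_h(q) = f^{-1}(h \cdot f(q))$ is continuous, one recovers $(p(h),\lambda(h))$ continuously from the values of $\Phi(h)$ at two or three well chosen test points. Hence $\Phi$ is a continuous local homomorphism between Lie groups, and therefore smooth.

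It remains to verify that $\Phi$ is a local embedding and to invoke Lemma \ref{HisG}. Local injectivity is immediate: $\Phi(h) = \mathrm{id}$ forces $L_h^H = \mathrm{id}$ on the open set $f(V_h) \subset H$, hence $h = e_H$. Since $H$ is simply connected nilpotent, $\exp_H$ is a diffeomorphism, so any nonzero kernel direction of $d\Phi_e$ would supply a nontrivial one parameter subgroup inside $\ker \Phi$, contradicting local injectivity; thus $d\Phi_e$ is injective. Invariance of domain for the homeomorphism $f$ gives $\dim H = \dim \G$, so $d\Phi_e(\mathrm{Lie}(H)) \subset \g \rtimes \R$ is a nilpotent Lie subalgebra of codimension one. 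The Lie algebra version of Lemma \ref{HisG} identifies it with $\g$, making $d\Phi_e : \mathrm{Lie}(H) \to \g$ a Lie algebra isomorphism; since both $H$ and $\G$ are connected and simply connected, they are isomorphic as Lie groups.

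I expect the main obstacle to lie in the technical setup rather than in any single conceptual step: one must carefully track how the domains $V_h$ shrink as $h$ varies, use real analyticity to promote each germ $g_h$ to a genuine element of $\mathrm{TD}(\G)$, and verify that $h \mapsto \Phi(h)$ is continuous into the Lie group $\mathrm{TD}(\G)$ rather than only pointwise, so that the classical fact that continuous homomorphisms between Lie groups are smooth actually applies. Once those points are in place, Lemma \ref{HisG} does essentially all of the remaining algebraic work.
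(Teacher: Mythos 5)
Your proposal is correct and follows essentially the same route as the paper: conjugate the left translations of $H$ by $f$, use ultrarigidity to land in $\mathrm{TD}(\G)$, observe this gives an injective local homomorphism, and apply Lemma \ref{HisG} to the resulting nilpotent codimension-one subalgebra. You supply some technicalities the paper leaves implicit (continuity and smoothness of $h\mapsto\Phi(h)$, invariance of domain for the dimension count), but the skeleton is identical.
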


%

\proof  
 Let $f:U\to U'$ be the quasiconformal homeomorphism.
Composing with a suitable translation, we may assume that $f(e_\G)=e_H$.
Then, for $h \in H$, we consider the quasiconformal map
$$f_h :=f^{-1}\circ \tau_h\circ f :\tilde U\subseteq \G \to \G,$$
which is well defined for $h$ close enough to the identity in $H$ and for some open set $\tilde U\subseteq \G$.
By definition of ultrarigidity, we can assume $f_h$ is in fact in ${\rm TD}(\G)$.

We claim that the map $h\mapsto f_h$ is an injective local homomorphism of $H$ into TD$(\G)$. Therefore, going to the Lie algebra level, we get a (globally defined) injective local homomorphism of $\mathfrak {h}$ into Lie(TD$(\G)$). Indeed, the map is a homomorphism, because
$$f_h\circ f_{h'} =f^{-1}\circ \tau_h  \circ \tau_{h'}\circ f =f^{-1}\circ \tau_{h h'}\circ f =f_{h h'}, \qquad\forall h,h'\in H.$$
Regarding the injectivity, for $h\neq e_H$, we   show that $f_h\neq {\rm id}$. Since $f(e_G)=e_H$,
$$f_h(e_G)=(f^{-1}\circ \tau_h\circ f ) (e_G)=(f^{-1}\circ \tau_h)(e_H)
=f^{-1}(h)\neq e_G.$$
Therefore $\mathfrak {h}$ is   isomorphic to a subalgebra $\mathfrak {h}_0< {\rm Lie (TD} (\G))$.
Since $\mathfrak {h}_0$ is nilpotent, by Lemma \ref{HisG},
we have that $\mathfrak {h}_0=\g\times \{0\}$. Thus $\mathfrak {h}$ is   isomorphic to $\g$ and therefore $H$ is isomorphic to $\G$, since they are  connected, simply connected nilpotent Lie groups.  
\qed

\subsection{Example of a non-Carnot group with ultrarigid tangent}\label{non-Carnot}
We present here an example of a sub-Riemannian nilpotent Lie group  that demonstrates the validity of Theorem \ref{main}. Namely, we exhibit a nilpotent Lie group $H$ whose tangent cone is the $16$-dimensional group $\G$ as in Example \ref{ex1} such that the pair $\G$, $H$  satisfy the condition of Theorem  \ref{main2}. In turn, this implies Theorem \ref{main} and Corollary \ref{corollario}.

The nilpotent group is the following.
Take $\G=\exp(\g)$ where $\g$ is Example~\ref{ex1}, and let $H=\exp(\mathfrak{h})$ where $\mathfrak{h}$ is the $16$-dimensional nilpotent Lie algebra with the same bracket relations as $\g$ and the additional bracket $[e_1,e_{11}]=e_{14}$. Note that this additional bracket is of order $3$ and so $\mathfrak{h}$ is not stratified. 

If $ X_i$ denotes the left invariant vector field corresponding to $e_i$, then the horizontal space $\mathcal{H} \subset TH$ is framed by $ X_1, \dots,  X_{10}$. For a given point $p$,  $L_1$ is the set of smooth sections of $\mathcal{H}$ defined on a neighbourhood of $p$, and $L_2=L_1+[L_1,L_1]$. It follows that $ X_1 \in L_1$, $ X_{11}\in L_2$ and $ X_{14}\in L_2$, hence $$[ X_1,  X_{11}]+L_2= X_{14}+L_2=0+L_2.$$ On the other hand, if $ X,  Y \in L_1$ and $[ X,  Y]=0+L_1$, then $[ X,  Y]=0$ and so $\g(p)=\g$ for all $p \in H$.

\section{Equivalence of definitions for ultrarigid groups}
\label{section-proof-equivalence}


In this section we prove  Theorem \ref{ultralip}. Part of our proof uses a theorem of Tanaka, that provides a characterization of the space of contact maps on a Carnot group $\G$ at the infinitesimal level. In order to state Tanaka's theorem, it is convenient to change part of the notation. Throughout this section  we shall denote  by $\g_{-i}$ the strata $V_i$ of a nilpotent and stratified Lie algebra $\g$, for every $i=1,\dots,s$.

\subsection{Tanaka Prolongation}
The {\it Tanaka prolongation} of $\g$ is the graded Lie algebra ${\rm Prol}(\g)$ given by the direct sum $${\rm Prol}(\g):= \bigoplus_{k \in \Z} \g_k, $$ where $ \g_k=\{0\}$ for $k < -s$ and for each $k \geq 0$, $\g_k$ is inductively defined by
$$\g_k :=\Big \{  u \in \bigoplus_{\ell<0} \g_{\ell+k} \otimes \g_\ell^* \ | \ u([X,Y])=[u(X),Y]+[X,u(Y)] \Big \}.$$
Clearly, for $k=0$ we get the strata preserving derivations.
 If  $u \in \g_k$, where $k \geq 0$, then the condition in the definition becomes the Jacobi identity upon setting $[u,X]=u(X)$ when $X \in \g$. Furthermore, if $u \in \g_k$ and $u^\prime \in \g_\ell$, where $k,\ell \geq 0$, then $[u,u^\prime] \in \g_{k+\ell}$ is defined inductively according to the Jacobi identity, that is
$$ [u,u^\prime](X)=[u,[u^\prime,X]]-[u^\prime,[u,X]].$$
In \cite{Tanaka}, Tanaka shows that ${\rm Prol}(\g)$ determines the structure of the contact vector fields on the group $G$. A {\em contact vector field} is defined as the infinitesimal generator of a local flow of contact maps, and the space of these vector fields forms a Lie algebra with the usual bracket of vector fields. We recall that $D$ denotes
the standard dilation defined in Section~\ref{CarnotGroups}, and we rephrase the result of Tanaka with the following statement.
\begin{theorem}[N. Tanaka, \cite{Tanaka}]\label{tanaka}
 Let $U\subset\G$ be an open set. Denote by $\mathcal{C}(U)$ the Lie algebra of smooth contact vector fields on $U$. If ${\rm Prol}(\g)$ is finite dimensional, then there exists a Lie algebra isomorphism between ${\rm Prol}(\g)$ and $\mathcal{C}(U)$. In particular, if ${\rm Prol}(\g) =\g\oplus\g_0$
  and $\g_0=\R D$, one may choose this 
  isomorphism to be the linear map $\rho$ defined by the assignments
\begin{equation}\label{rhoD}
 \rho(D)\phi(p) = \frac{d}{dt} \phi (\exp(e^{-tD}\exp^{-1}(p)))|_{t=0},
\end{equation}
\begin{equation}\label{rhoX}
 \rho(X)\phi(p) = \frac{d}{dt} \phi (\exp(-tX)p)|_{t=0},
\end{equation}
where $p\in U$, $\phi$ is a 
smooth function on $U$ and $X$ varies in $\g$.
\end{theorem}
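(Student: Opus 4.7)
The plan is to construct $\rho$ degree by degree along the grading of ${\rm Prol}(\g)$, verify that it is a Lie algebra homomorphism with image in $\mathcal{C}(U)$, and then use the finite-dimensionality hypothesis to establish surjectivity.

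On the negative part $\g = \bigoplus_{i=1}^s \g_{-i}$, I would define $\rho(X)$ by (\ref{rhoX}): it is (up to sign) the right-invariant vector field on $\G$ whose value at the identity is $X$, so its flow acts by left translations, which preserve the left-invariant distribution $\mathcal{H}$; hence $\rho(X) \in \mathcal{C}(U)$. On $\g_0$, every element is a strata-preserving derivation, and the formula (\ref{rhoD}) (extended from $D$ to arbitrary $A \in \g_0$) exponentiates it to a one-parameter group in $\aut_0(\g)$ acting on $\G$; such automorphisms preserve $\mathcal{H}$ by definition. For $k \geq 1$, I would define $\rho(u)$ inductively as the unique smooth vector field on $U$ satisfying $[\rho(u),\rho(X)] = \rho([u,X])$ for all $X \in \g$ and vanishing to weighted order $k$ at $e$ with leading jet equal to $u$. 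Existence follows from the Jacobi-type identity built into the defining relation of $\g_k$, and uniqueness from the observation that the difference of two candidates would be a left-invariant vector field vanishing at $e$, hence zero.

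Bracket-preservation of $\rho$ on $\g \oplus \g_0$ is a direct computation from (\ref{rhoD}) and (\ref{rhoX}), and the extension to higher degrees preserves brackets by construction. Injectivity follows from a weighted order-of-vanishing argument with respect to the horizontal flag $\mathcal{H} \subset \mathcal{H} + [\mathcal{H},\mathcal{H}] \subset \cdots \subset T\G$: if $u = \sum_{k \geq k_0} u_k$ has lowest nonzero component $u_{k_0}$, then the leading weighted jet of $\rho(u)$ at $e$ equals $u_{k_0}$, so $\rho(u) = 0$ forces $u = 0$ by descending induction.

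The main obstacle is surjectivity. My plan is to filter $\mathcal{C}(U)$ at the identity by subspaces $\mathcal{C}^k_e$ of contact vector fields whose weighted jet at $e$ vanishes to order at least $k$. The contact condition imposes on each graded piece $\mathcal{C}^k_e / \mathcal{C}^{k+1}_e$ precisely the graded derivation relation that defines $\g_k$, producing an injection $\bigoplus_k \mathcal{C}^k_e / \mathcal{C}^{k+1}_e \hookrightarrow {\rm Prol}(\g)$. The finite-dimensionality of ${\rm Prol}(\g)$ forces this filtration to terminate, and the contact condition provides a recursion that determines all higher Taylor coefficients of a contact vector field from a finite initial segment of its weighted jet at $e$; consequently, a contact vector field vanishing to sufficiently high weighted order at $e$ must vanish on $U$. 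A dimension count against the injective $\rho$ from the previous stages then yields surjectivity. In the special case ${\rm Prol}(\g) = \g \oplus \g_0$ the inductive extension beyond degree zero is vacuous, and $\rho$ is given directly by the explicit formulas (\ref{rhoD}) and (\ref{rhoX}).
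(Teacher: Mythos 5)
The paper does not prove this statement: it is imported verbatim as Tanaka's theorem from [Tan70], with the identification of $\rho$ in the case ${\rm Prol}(\g)=\g\oplus\g_0$ being the only part the authors actually use (and they justify the extension of local contact fields separately, in Remark~\ref{tanisorem}, again by citing Tanaka). So there is no in-paper argument to compare yours against; what you have written is an outline of Tanaka's own proof strategy.

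As an outline it identifies the right architecture --- realize $\g\oplus\g_0$ by explicit contact flows, extend degree by degree, and filter $\mathcal{C}(U)$ by weighted order of vanishing at a base point to get a graded injection into ${\rm Prol}(\g)$ --- and the easy halves are correct: the fields in \eqref{rhoX} are (negatives of) right-invariant fields whose flows are left translations, the fields in \eqref{rhoD} generate strata-preserving automorphisms, both are contact, and your injectivity and uniqueness arguments (a field commuting with all right-invariant fields is left-invariant, hence zero if it vanishes at a point) are sound. But the two steps that carry essentially all of the content of Tanaka's theorem are asserted rather than proved. First, \emph{existence} of a contact vector field realizing a given $u\in\g_k$, $k\ge 1$: the algebraic relation defining $\g_k$ guarantees only a consistent formal jet; producing an actual vector field requires either the homogeneous-space model $\G\cong P/P^{\ge 0}$ for the group $P$ of ${\rm Prol}(\g)$, or a convergence/integrability argument, neither of which follows from ``the Jacobi-type identity built into the defining relation.'' Second, \emph{surjectivity} rests on your claim that a contact field vanishing to sufficiently high weighted order at a point vanishes identically; this finite-determinacy statement is precisely the analytic heart of Tanaka's work (his prolongation of the associated differential system) and cannot be dispatched by saying ``the contact condition provides a recursion.'' One must show the recursion is complete, i.e.\ that the graded symbol map $\mathcal{C}^k_e/\mathcal{C}^{k+1}_e\to\g_k$ is injective \emph{and} that formal vanishing implies actual vanishing on the connected open set $U$. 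A minor additional point: your filtration is taken at $e$, but $U$ need not contain $e$; you should filter at an arbitrary $p\in U$ or first invoke the extension of local contact fields to global ones, which itself is part of what is being proved.
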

The interested reader can consult~\cite{Tanaka, Yamaguchi} for a thorough overview, and~\cite{OW} for a basic introduction.
\begin{remark}\label{tanisorem}
Since $\G$ is simply connected, then in the case ${\rm Prol}(\g)$ is finite dimensional, every $V \in \mathcal{C}(U)$ uniquely extends to an element of $\mathcal{C}(\G)$, see~\cite[page 34]{Tanaka}.
\end{remark}
\begin{remark}
 We notice that ${\rm Prol}(\g)$ is finite dimensional if and only if $\g_k=\{0\}$ for some $k\geq 0$. In fact, we have that $\g_k=\{0\}$ implies $\g_{k+l}=\{0\}$
for every $l\geq 0$.
\end{remark}
We note that if ${\rm Aut}_0(\g)$ consists only of dilations, then $\g_0$ is exactly the span of $D$ and thus one dimensional. The following lemma implies that in this case the only contact flows are dilations. We would like to thank M. Reimann for bringing this fact to our attention.
\begin{lemma}  \label{prodim1}
Let ${\g}$ be a nonabelian nilpotent and stratified Lie algebra such that $\g_0$ is one dimensional. Then the prolongation of $\g$ is $\g \oplus \g_0$.
\end{lemma}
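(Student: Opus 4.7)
The strategy is to prove $\g_k = 0$ for every $k \geq 1$, which gives $\text{Prol}(\g) = \g \oplus \g_0$. By induction this reduces to the case $k = 1$: assuming $\g_{k-1} = 0$ for some $k \geq 2$ and $u \in \g_k$, one has $u(X) = [u, X] \in \g_{k-1} = 0$ for every $X \in \g_{-1}$, so $u|_{\g_{-1}} = 0$; the derivation identity, together with the fact that $\g$ is generated by $\g_{-1}$, then forces $u = 0$.

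Suppose for a contradiction that some nonzero $u \in \g_1$ exists. Let $E$ denote the unique strata-preserving derivation acting as $-k\cdot\id$ on $\g_k$; since $\g_0 = \R E$, the restriction $u|_{\g_{-1}}$ takes the form $X \mapsto \lambda(X)\,E$ for some $\lambda \in \g_{-1}^{*}$. If $\lambda = 0$, the previous paragraph's argument yields $u = 0$, a contradiction, so $\lambda \neq 0$. The derivation rule determines $u$ recursively on all of $\g$; applying it to the relation $[X, W] = 0$ for $X \in V_1$, $W \in V_s$ produces the \emph{key constraint}
\begin{equation*}
[X, u(W)] \;=\; s\,\lambda(X)\,W, \qquad X \in V_1,\; W \in V_s.
\end{equation*}

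The core of the proof is the following inductive claim: for each $m = 0, 1, \dots, s-2$,
\begin{equation*}
(\ad_Y)^{s-1-m}\,V_{m+1} \;=\; 0 \qquad \text{for every } Y \in V_1 \text{ with } \lambda(Y) = 1.
\end{equation*}
Fix such a $Y$ and a bracket $W \in V_{m+1}$, and set $Z_0 := (\ad_Y)^{s-1-m}W \in V_s$. A direct derivation recursion yields the expansion
\begin{equation*}
u(Z_0) \;=\; -\tfrac{(s-1-m)(s+m)}{2}\,(\ad_Y)^{s-2-m}W \;+\; (\ad_Y)^{s-1-m}u(W).
\end{equation*}
Applying $[Y,\cdot]$ and then substituting $X = Y$ in the key constraint gives an equation of the form $c_{s-1-m}\,Z_0 + (\ad_Y)^{s-m}u(W) = s\,Z_0$. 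For $m \geq 1$ the inductive hypothesis $(\ad_Y)^{s-m}V_m = 0$ (the case $m-1$) kills the second term; for $m = 0$, $u(W) = \lambda(W)E$ and $(\ad_Y)^{s}E = (\ad_Y)^{s-1}Y = 0$ since $s \geq 2$, again killing the term. What remains is
\begin{equation*}
-\tfrac{(s-m)(s+m+1)}{2}\,Z_0 \;=\; 0,
\end{equation*}
whose coefficient is nonzero for $m \in \{0, \dots, s-2\}$, so $Z_0 = 0$. Since $(\ad_Y)^{s-1-m}$ is polynomial in $Y$ and vanishes on the Zariski-dense set $\{\lambda \neq 0\}$, the identity holds for every $Y \in V_1$.

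Applying the claim at $m = s-2$ yields $[Y, V_{s-1}] = 0$ whenever $\lambda(Y) \neq 0$. As every element of $V_1$ is a difference of two such elements, $[V_1, V_{s-1}] = V_s = 0$, contradicting the nonabelianness of $\g$ of step $s \geq 2$. This forces $\g_1 = 0$ and completes the proof. The hard part is the combinatorial bookkeeping in the derivation recursion: the expansion of $u(Z_0)$ must be organized so that after invoking the previous inductive conclusion to kill cross terms, exactly one clean multiple of $Z_0$ survives, with a verifiable closed-form scalar $(s-m)(s+m+1)/2$ whose nonvanishing drives the whole induction.
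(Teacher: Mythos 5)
Your proof is correct. It shares its skeleton with the paper's argument: both reduce the statement to $\g_1=\{0\}$ (higher prolongation spaces vanish automatically once one does, since $\g$ is generated by the first stratum), both write $u|_{V_1}=c(\cdot)\,D$ using $\dim\g_0=1$, both extract the key identity $[X,u(W)]=\pm s\,c(X)W$ for $W$ in the (central) top stratum, and both ultimately exploit that iterating $\ad_X$ on an element of $\g_0$ dies after two steps because $\ad_X X=0$. Where you genuinely diverge is the combinatorial engine. The paper iterates $u$ itself on a single central element $Z\in V_s$, proves $[X,u^j(Z)]=c(X)\tfrac{j(2s-j+1)}{2}u^{j-1}(Z)$ by induction on $j$, and compares the two evaluations of $\ad_X^s u^s(Z)$: a nonzero multiple of $c(X)^s Z$ on one hand, zero on the other since $u^s(Z)\in\g_0$. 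This yields $c\equiv 0$ directly. You never take powers of $u$; instead you run a descending induction over the strata, proving $(\ad_Y)^{s-1-m}V_{m+1}=0$, and land on the contradiction $V_s=[V_1,V_{s-1}]=0$. I verified your scalar: unrolling the derivation recursion gives the coefficient $\sum_{i=0}^{s-2-m}(m+1+i)=\tfrac{(s-1-m)(s+m)}{2}$, and combining with the key constraint gives $\tfrac{(s-1-m)(s+m)}{2}+s=\tfrac{(s-m)(s+m+1)}{2}$; moreover the relative sign of the two contributions is convention-independent (both flip together with the sign of $[D,\cdot]$), so the surviving coefficient is genuinely nonzero for $0\le m\le s-2$. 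Each approach buys something: the paper's is more compact, yours avoids the $u^j$ bookkeeping and makes the role of each stratum explicit. One small simplification: the Zariski-density remark is superfluous inside the induction, since the inductive hypothesis is only ever invoked for the same fixed $Y$ with $\lambda(Y)=1$; it is needed only at the last step $m=s-2$, where linearity of $Y\mapsto\ad_Y$ already does the job, as you note.
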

\begin{proof}
We need to show that $\g_1=\{0\}$. Set $u\in\g_1$. Then $u(\g_j)\subset \g_{j+1}$ for every $j=-s,\dots,-1$ and $u([X,Y])=[u(X),Y]+[X,u(Y)]$ for all $X,Y\in\g$.
Since $\g_0$ has dimension one, then $\g_0={\rm span}\{D\}$.
In particular, if $X\in\g_{-1}$, then $u(X)=c(X) D$, where $c:\g_{-1} \to  \R$ is linear. In order to show that $u=0$, it is enough to prove that $ c(X)=0 $ for all $X\in\g_{-1}$.

Let $\z(\g)$ denote the centre of $\g$, and let $Z \in \z(\g)\cap \g_{-k}\ne \emptyset$. Then for all $X \in \g_{-1}$
\begin{align*}
 0=u([X,Z]) &= [u(X),Z] +[X,u(Z)]\\
&= -c(X) k Z + [X,u(Z)],
\end{align*}
whence 
$$
[X,u(Z)]=c(X) k Z.
$$
By induction, it is easy to show that
$$
 [X,u^j (Z)]= c(X) \sum_{l=0}^{j-1}(k-l) u^{j-1}(Z)=c(X) \frac{j(2k-j+1)}{2} u^{j-1}(Z),
$$
which in the case $j=k$ gives
\begin{equation}\label{firsteq}
[X,u^{k}(Z)]= c(X) \frac{k(k+1)}{2}u^{k-1}(Z).
\end{equation}
Furthermore, iterating (\ref{firsteq}) gives 
\begin{equation}
{\rm ad}_X^ku^k(Z)=A_kc(X)^kZ, \label{adxkz}
\end{equation}
where $A_k$ is a positive constant depending on $k$ only.
Notice that since $u^{k}(Z)\in\g_0$, we have $u^{k-1}(Z)\in\g_{-1}$, and it follows that
$$
[X,u^{k}(Z)]=[X, u( u^{k-1}(Z))]= -c(u^{k-1}(Z))X,
$$ for all $X\in\g_{-1}$. We conclude that for $k \geq 2$, the left hand side of (\ref{adxkz}) is zero. It follows that since $\g$ is nonabelian, then $c(X)=0$ since we can set $k=s\geq 2$, and choose a nonzero $Z \in \z(\g)\cap \g_{-s}$. 
\end{proof}

\subsection{Proof of Theorem~\ref{ultralip}}\label{proofultralip}

${[\ref{ultra-rigidity}.1]} \Rightarrow {[\ref{ultra-rigidity}.2]}$. Every element  $\alpha \in {\rm Aut}_0 (\g)$ lifts to an automorphism of $\G$ which is also a contact map. Therefore by hypothesis this contact map is an element of TD$(\G)$, and since it is an automorphism it must be  a group-dilation. We conclude that $\alpha$ is an algebra-dilation.

${[\ref{ultra-rigidity}.2]} \Rightarrow {[\ref{ultra-rigidity}.1]}$. Let $f:U\rightarrow \G$ be a quasiconformal embedding. Then $f$ is Pansu differentiable at almost every $p\in U$. Therefore  $df(p)=\delta_{\lambda(p)}$  for almost every $p\in U$ and by Lemma \ref{dfsmooth}, $f$ is $1$-quasiconformal and smooth. In particular $f$ is a smooth contact map.

After normalising with left translations if necessary, we can assume that $e\in U$ and $f(e)=e$. Moreover $f_* W\in \mathcal{C}(f(U))$ for every $W\in\mathcal{C}(U)$. By Remark~\ref{tanisorem}, $f_*$ induces a Lie algebra isomorphism of $\mathcal{C}(U)$,  which we  also denote by $f_*$.
It then follows that $\rho^{-1}f_*\rho$ is an automorphism of $\g\oplus\g_0$. This automorphism has some extra properties that we show in the following lemma.
\begin{lemma}\label{autorho}
The automorphism $\alpha := \rho^{-1}f_*\rho$  preserves $\g$ and $\g_0$. Moreover, $\alpha|_{\g}\in {\rm Aut}_0(\g)$.
\end{lemma}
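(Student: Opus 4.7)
The plan is to use the explicit formulas \eqref{rhoD} and \eqref{rhoX} to translate three structural features of $\g\oplus\g_0$ into geometric features of $\mathcal{C}(U)$ that $f_*$ visibly respects. Since we are proving ${[\ref{ultra-rigidity}.2]}\Rightarrow {[\ref{ultra-rigidity}.1]}$, we may assume that ${\rm Aut}_0(\g)$ consists only of dilations, whence $\g_0=\R D$. Lemma \ref{prodim1} then gives ${\rm Prol}(\g)=\g\oplus\g_0$, so Theorem \ref{tanaka} supplies the isomorphism $\rho$ on which $\alpha$ is built, and the paragraph preceding the lemma has already established that $\alpha$ is a Lie algebra automorphism of $\g\oplus\g_0$.

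The first step is to show $\alpha(\g_0)=\g_0$. Evaluating the formulas of Theorem \ref{tanaka} at $p=e$ yields $\rho(X)(e)=-X$ for $X\in\g$ and $\rho(D)(e)=0$, so the isotropy subalgebra of $e$ inside $\mathcal{C}(U)$ is precisely $\rho(\g_0)$. Because $f(e)=e$, the identity $(f_*V)(e)=df_e(V(e))$ combined with invertibility of $df_e$ shows that $f_*$ preserves the subspace of contact vector fields vanishing at $e$, and applying $\rho^{-1}$ on both sides gives $\alpha(\g_0)=\g_0$. For the second step $\alpha(\g)=\g$, I would characterise $\g$ as the derived subalgebra of $\g\oplus\g_0$, which every Lie algebra automorphism must preserve: indeed $[\g,\g]\subset\g$, and since $[D,X]=jX$ for $X\in V_j$ one has $[\g_0,\g]=\bigoplus_j V_j=\g$, so $[\g\oplus\g_0,\g\oplus\g_0]=\g$. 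For the final assertion $\alpha|_\g\in {\rm Aut}_0(\g)$, the first step produces a unique $c\neq 0$ with $\alpha(D)=cD$; applying $\alpha$ to $[D,X]=jX$ for $X\in V_j$ yields $D(\alpha(X))=(j/c)\alpha(X)$, and since the $D$-spectrum on $\g$ is $\{1,\dots,s\}$ this forces first $c>0$ and then $c=1$ by comparing the extreme cases $j=1$ and $j=s$, whence $\alpha(V_j)=V_j$ for every $j$.

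I do not expect a genuine obstacle beyond the bookkeeping above; the one point that warrants care is the claim that ``vanishing at $e$'' pins down $\g_0$ on the nose, which is why I invoke the explicit formulas \eqref{rhoX} and \eqref{rhoD} rather than any abstract property of the prolongation. Once this geometric interpretation of $\g_0$ is in hand, every remaining step is a one-line Lie-algebraic verification using that $\alpha$ is an automorphism of $\g\oplus\g_0$.
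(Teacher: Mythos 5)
Your proof is correct, and its first two steps (reading off $\alpha(\g_0)=\g_0$ from $\rho(D)(e)=0$ versus $\rho(X)(e)=-X\neq 0$, and $\alpha(\g)=\g$ from $\g=[\g\oplus\g_0,\g\oplus\g_0]$) coincide with the paper's. Where you genuinely diverge is the strata-preservation step. The paper argues geometrically: it reduces to preserving $\g_{-1}$ and establishes the commutation rule $f_*\rho(X)=\rho(f_*|_e X)$ for $X\in\g_{-1}$, using that $df(e)=\delta_\lambda$ (which is where hypothesis [\ref{ultra-rigidity}.2] enters) together with the right-invariance of the fields $\rho(X)$. You instead run a purely Lie-algebraic eigenvalue argument: $\alpha(D)=cD$ forces $D(\alpha(X))=(j/c)\,\alpha(X)$ for $X\in V_j$, and since the spectrum of $D$ on $\g$ is $\{1,\dots,s\}$, comparing $j=1$ and $j=s$ pins down $c=1$ and hence $\alpha(V_j)=V_j$. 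This is clean and, notably, does not use that the Pansu differential is a dilation at all --- it would prove the lemma for any smooth contact map fixing $e$ whenever ${\rm Prol}(\g)=\g\oplus\g_0$ with $\g_0$ one-dimensional. What the paper's route buys in exchange is the identity \eqref{commutationrule} itself, which is reused immediately afterwards to show that $F=\delta_{1/\lambda}\circ f$ commutes with left translations and conclude $f=\delta_\lambda$; if you adopt your argument for the lemma, you would still need to establish \eqref{commutationrule} (or an equivalent) separately to finish the proof of Theorem~\ref{ultralip}.
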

\begin{proof}
 By \eqref{rhoD}  we see that $\rho(D)(e)=0$, whereas by \eqref{rhoX} we have that $\rho(X)(e)\neq 0$ for every $X\in\g$. Since $f(e)=e$, we conclude that $\g_0$ is preserved.
Since $D$ is surjective, it follows that $[\g\oplus\g_0,\g\oplus\g_0]=\g$ which implies $\alpha(\g)= \g$.

In order to show that $\alpha|_{\g}$ preserves the strata, it is enough to prove that it preserves $\g_{-1}$. Since $f$ is contact, this is true if the  equation 
\begin{equation}\label{commutationrule}
f_*\rho(X) = \rho (f_*|_{e}X),
 \end{equation}
holds for every $X\in\g_{-1}$. To show \eqref{commutationrule}, we first observe that the flow of $f_*\rho(X)$ through $p$ is $f_t(p)=f(\exp(-tX)f^{-1}(p))$ and in particular $f_t(e)=f(\exp(-tX))$. Since $f(e)=e$, we have that $f_*|_{e}=df(e)$ on $\g_{-1}$. By hypothesis, $df(e)=\delta_\lambda$ for some $\lambda \in\R\setminus\{0\}$, and it follows that
$$
\frac{d}{dt}f_t(e)=-f_*|_{e}(X)=-\lambda X.
$$
Hence \eqref{commutationrule} is valid when evaluated at the identity. The equality at all points follows from the fact that both $f_*\rho(X)$
and $\rho (f_*|_{e}X)$ are right invariant vector fields, see~\eqref{rhoX}.
\end{proof}

We now conclude the proof of Theorem \ref{ultralip}.
Since $\rho^{-1}f_*\rho|_{\g}\in{\rm Aut}_0(\g)$, it follows that $\rho^{-1}f_*\rho|_{\g} =\delta_{\lambda}$ for some $\lambda\in\R\setminus\{0\}$. If $F(p)=\delta_{1/\lambda} \circ f$, then $F(e)=e$, and by \eqref{commutationrule}, we see that $\rho^{-1}F_*\rho|_{\g} =I$. In particular $F_*\rho(X)=\rho(X)$ for every $X\in\g$.
Thus, $F_*$ preserves each right invariant vector field, and so $F$ commutes with the left translations. Hence $F(p q)=pF(q)$, and putting $q=e$ shows that $F$ is the identity. Therefore $f=\delta_\lambda$ and the proof of Theorem~\ref{ultralip} is complete. \qed

\vskip0.2cm

\end{document}